\theoremstyle{plain}
\newtheorem{thm}{Theorem}
\begin{document}
\title{On competitive discrete systems in the plane. I. Invariant manifolds.}

\author[Gabriel Lugo]{Gabriel Lugo}
\address{Department of Mathematics, University of Rhode Island,Kingston, RI 02881-0816, USA;}
\email{glugo@math.uri.edu}
\author[Frank J. Palladino]{Frank J. Palladino}
\address{Department of Mathematics, University of Rhode Island,Kingston, RI 02881-0816, USA;}
\email{frank@math.uri.edu}
\date{October 19, 2011}
\subjclass{37E30,39A20,39A28,39A30}
\keywords{basin of attraction, competitive, global stable manifold, monotonicity, rational system}

\begin{abstract}
\noindent Let $T$ be a $C^{1}$ competitive map on a rectangular region $R\subset \mathbb{R}^{2}$. The main results of this paper give conditions which guarantee the existence of an invariant curve $C$, which is the graph of a continuous increasing function, emanating from a fixed point $\bar{z}$. 
We show that $C$ is a subset of the basin of attraction of $\bar{z}$ and that the set consisting of the endpoints of the curve $C$ in the interior of $R$ is forward invariant. The main results can be used to give an accurate picture of the basins of attraction for many competitive maps.\par
We then apply the main results of this paper along with other techniques to determine a near complete picture of the qualitative behavior for the following two rational systems in the plane.
 $$x_{n+1}=\frac{\alpha_{1}}{A_{1}+y_{n}},\quad y_{n+1}=\frac{\gamma_{2}y_{n}}{x_{n}},\quad n=0,1,\dots,$$
with $\alpha_1,A_{1},\gamma_{2}>0$ and arbitrary nonnegative initial conditions so that the denominator is never zero.
$$x_{n+1}=\frac{\alpha_{1}}{A_{1}+y_{n}},\quad y_{n+1}=\frac{y_{n}}{A_{2}+x_{n}},\quad n=0,1,\dots,$$
with $\alpha_1,A_{1},A_{2}>0$ and arbitrary nonnegative initial conditions.
\end{abstract}
\maketitle

\section{Introduction}
Let $R$ be a subset of $\mathbb{R}^{2}$ with nonempty interior, and let $T:R\rightarrow R$ be a map. Set $T(x,y)=(f(x,y),g(x,y))$. The map $T$ is \emph{competitive} if $f(x,y)$ is nondecreasing in $x$ and nonincreasing in $y$, and $g(x,y)$ is nonincreasing in $x$ and nondecreasing in $y$. 
If $T$ is competitive, the associated system of difference equations,
\begin{equation}
x_{n+1}=f(x_{n},y_{n}),\quad y_{n+1}=g(x_{n},y_{n}),\quad n=0,1,\dots,\quad (x_{0},y_{0})\in R,
\end{equation}
is said to be competitive. The map $T$ and associated system of difference equations are said to be \emph{strongly competitive} if the adjectives nondecreasing and nonincreasing in the prior description are replaced by strictly increasing and strictly decreasing respectively.\par
Competitive systems of the form (1) have been studied by many authors, such as Clark, Kulenovi\'c, and Selgrade \cite{cks}, Dancer and Hess \cite{dh}, Franke and Yakubu \cite{fy1}-\cite{fy3}, Hess and Pol\'a\v{c}ik \cite{hp}, Hirsch and Smith \cite{hsbook}, Kulenovi\'c and Merino \cite{kmbifurcation}-\cite{kminvariantmanifolds}, Smith \cite{sm1}-\cite{sm5}, and others.
The term \emph{competitive} was introduced by Hirsch, see \cite{h}, for systems of autonomous differential equations. Many mathematical models in the biological sciences may be classified as competitive, see \cite{clch}, \cite{dms}, \cite{hs}, \cite{lm} and \cite{s}. Consideration of the Poincar\'e maps of these systems lead naturally to the concept of competitive maps in the discrete case, see \cite{dms}, \cite{sm2} and \cite{sm3}.\par
Recently, there has been a surge of activity in the study of competitive systems in the plane, see \cite{bo}-\cite{bkk}, \cite{gdn}, and \cite{kk}-\cite{kn2}. Several powerful results were obtained which will serve to meet the demands of researchers in mathematical biology and rational difference equations. Of particular interest, are the results of Kulenovi\'c and Merino in \cite{kminvariantmanifolds}. Competitive rational systems in the plane are prone to have several equilibria. Kulenovi\'c and Merino's results in \cite{kminvariantmanifolds} allow for a very detailed qualitative description of the basins of attraction in many cases. The main drawback of Kulenovi\'c and Merino's results in \cite{kminvariantmanifolds} is that the system is required to be strongly competitive so that their results may apply.
So that the reader understands the impact of the requirement that the system be strongly competitive, let us consider the results in the context of the general linear fractional system in the plane.
\begin{equation}
x_{n+1}=\frac{\alpha_{1}+\beta_{1}x_{n}+\gamma_{1}y_{n}}{A_{1}+B_{1}x_{n}+C_{1}y_{n}},\quad y_{n+1}=\frac{\alpha_{2}+\beta_{2}x_{n}+\gamma_{2}y_{n}}{A_{2}+B_{2}x_{n}+C_{2}y_{n}},\quad n=0,1,2,\dots ,
\end{equation}
with all parameters and initial conditions nonnegative so that division by zero is avoided. System (2) has $49^{2}=2,401$ special cases depending on which parameters are chosen to be positive or zero, of which $17^{2}=289$ are competitive. A numbering system was introduced in \cite{cklm} to keep track of the special cases of System (2). 
Using this numbering system, the competitive cases are obtained by pairing any two numbers in the following list: 1, 3, 4, 5, 6, 9, 11, 13, 14, 15, 19, 21, 24, 27, 29, 38 and 42. However, any case which is obtained by pairing any two numbers in the following list 1, 3, 4, 5, 9, 11, 13, 19 and 24 is clearly reducible to a Riccati equation or a linear equation. So, there are $289-81=208$ nontrivial competitive cases.
The assumption (1) from the forthcoming Theorem 1 holds for all competitive cases that are obtained by pairing any two numbers in the following list: 4, 6, 13, 14, 15, 19, 21, 27, 29, 38 and 42. There are $11^{2}=121$ such cases of which $9$ cases are trivial. The assumption (2) from the forthcoming Theorem 1 holds for all competitive cases that are obtained by pairing any two numbers in the following list: 3, 6, 11, 14, 15, 21, 24, 27, 29, 38 and 42. There are $11^{2}=121$ such cases of which $9$ cases are trivial. For the purposes of comparison, note that, using the numbering system in \cite{cklm}, the strongly competitive cases are the cases obtained by pairing any two numbers in the following list: 6, 14, 15, 21, 27, 29, 38 and 42. So, there are $8^{2}=64$ special cases of System (2) that are strongly competitive.

\par
This paper is the first in a series of papers which will address the nontrivial competitive special cases of the following rational system in the plane, labeled system \#11. 
$$x_{n+1}=\frac{\alpha_{1}}{A_{1}+y_{n}},\quad y_{n+1}=\frac{\alpha_{2}+\beta_{2}x_{n}+\gamma_{2}y_{n}}{A_{2}+B_{2}x_{n}+C_{2}y_{n}},\quad n=0,1,2,\dots ,$$
with $\alpha_{1},A_{1}>0$ and $\alpha_{2}, \beta_{2}, \gamma_{2}, A_{2}, B_{2}, C_{2}\geq 0$ so that $\alpha_{2}+\beta_{2}+\gamma_{2}>0$ and $A_{2}+B_{2}+C_{2}>0$ and with nonnegative initial conditions $x_{0}$ and $y_{0}$ so that the denominator is never zero.
It is immediately apparent that no special case of system \#11 is strongly competitive anywhere. However, the tools given by Kulenovi\'c and Merino are essential tools which would allow us to give a near complete description of the qualitative behavior in many of the cases we study. Therefore, it is highly desirable for us to find analogous results for the cases we study. 
To this end, we generalize Kulenovi\'c and Merino's theory so that the new theory may be applied to the systems that we study in this series of papers on the nontrivial competitive special cases of system \#11. Mathematical biologists may also find this generalization useful.
The generalization of Kulenovi\'c and Merino's results has the broadest impact of all the results in the series and will be used throughout the series. In this sense, these results can be considered the main results of the series.\par The nontrivial competitive special cases of system \#11 are the cases numbered $(11,6)$, $(11,14)$, $(11,15)$, $(11,21)$, $(11,27)$, $(11,29)$, $(11,38)$ and $(11,42)$, in the numbering system developed in \cite{cklm}. Our goal in this article is to determine a complete picture of the qualitative behavior for the first two cases in the above list to the best of our ability.\par  
The first system, numbered $(11,6)$ in the numbering system developed in \cite{cklm}, is the system
$$x_{n+1}=\frac{\alpha_{1}}{A_{1}+y_{n}},\quad y_{n+1}=\frac{\gamma_{2}y_{n}}{x_{n}},\quad n=0,1,\dots,$$
with $\alpha_1,A_{1},\gamma_{2}>0$ and arbitrary nonnegative initial conditions so that the denominator is never zero.\par
The second system, numbered $(11,14)$ in the same numbering system, is the system
$$x_{n+1}=\frac{\alpha_{1}}{A_{1}+y_{n}},\quad y_{n+1}=\frac{\gamma_{2}y_{n}}{A_{2}+x_{n}},\quad n=0,1,\dots,$$
with $\alpha_1,A_{1},A_{2},\gamma_{2}>0$ and arbitrary nonnegative initial conditions. Notice that the change of variables $y^{*}_{n}=y_{n}$ and $x^{*}_{n}=\frac{x_{n}}{\gamma_{2}}$ and relabeling of parameters for the sake of notation reduces the second system to
$$x_{n+1}=\frac{\alpha_{1}}{A_{1}+y_{n}},\quad y_{n+1}=\frac{y_{n}}{A_{2}+x_{n}},\quad n=0,1,\dots,$$
with $\alpha_1,A_{1},A_{2}>0$ and arbitrary nonnegative initial conditions.\par
This article is organized as follows. In Section 2, we present some preliminary results and definitions for competitive maps which we will need in the remainder of the article. In Section 3, we present our generalization of Kulenovi\'c and Merino's theory. In Section 4, we present a near complete description of the qualitative behavior of the system $(11,6)$ using in part the results from Section 3.
In Section 5, we present a near complete description of the qualitative behavior of the system $(11,14)$ using in part the results from Section 3.
\section{Preliminary results and definitions for competitive maps}
Competitive maps are order preserving maps. Indeed, denote with $\preceq _{se}$ the \emph{southeast} partial order in the plane whose nonnegative cone is the standard fourth quadrant $\{(x,y)|x\geq 0,\; y\leq 0\}$, that is, $(x,y)\preceq _{se} (u,v)$ if and only if $x\leq u$ and $y\geq v$. Competitive maps in the plane preserve the southeast ordering, that is $T(s)\preceq _{se} T(w)$ whenever $s\preceq _{se} w$. We say that $s$ and $w$ are comparable in the order $\preceq _{se}$ if either $s\preceq _{se} w$ or $w\preceq _{se} s$. \par 
A map $T$ on a nonempty set $R\subset \mathbb{R}^{2}$ is a continuous function $T:R\rightarrow R$. A set $A\subset R$ is invariant for the map $T$ if $T(A)\subset A$. A point $x\in R$ is a fixed point of $T$ if $T(x)=x$, and a minimal period-two point if $T^{2}(x)=x$ and $T(x)\neq x$. The basin of attraction of a fixed point $x$ is the set of all $y$ such that $T^{n}(y)\rightarrow x$. A fixed point $x$ is a global attractor of a set $A$ if $A$ is a subset of the basin of attraction of $x$.\par
Given $(x,y)\in\mathbb{R}^{2}$, define $Q_{1}((x,y))=\{(u,v)\in\mathbb{R}^{2}|u\geq x\; and\; v\geq y\}$, $Q_{2}((x,y))=\{(u,v)\in\mathbb{R}^{2}|u\leq x\; and\; v\geq y\}$, $Q_{3}((x,y))=\{(u,v)\in\mathbb{R}^{2}|u\leq x\; and\; v\leq y\}$ 
and $Q_{4}((x,y))=\{(u,v)\in\mathbb{R}^{2}|u\geq x\; and\; v\leq y\}$. Given $(x,y)\in\mathbb{R}^{2}$ and a competitive map $T$ both $Q_{2}((x,y))$ and $Q_{4}((x,y))$ are invariant.\par

\section{A generalization of prior results on Strongly Competitive systems}
In \cite{kminvariantmanifolds}, Kulenovi\'c and Merino develop a powerful theory for dealing with the dynamics of competitive planar maps. We would like to use this theory to give a clear picture of the qualitative behavior for the two rational systems in the plane $(11,6)$ and $(11,14)$. However, these rational systems are not strongly competitive, so Kulenovi\'c and Merino's theory does not apply. In this section, we adjust the theorems and proofs of Kulenovi\'c and Merino in \cite{kminvariantmanifolds} to allow for cases which are not necessarily strongly competitive.\par
The idea behind the proof of the generalized version is the same idea as was used in the proof given by Kulenovi\'c and Merino. The structure of the argument also has many similarities and is identical in some places. However, we feel that it is necessary to present the new argument in its entirety in the case of Theorem 1 as there are many small changes throughout the proof of the generalized version. On the other hand, for Theorems 2, 3 and 4 large parts of the proof are identical to the proof presented in \cite{kminvariantmanifolds} and we only point out the differences between proofs where such differences arise.
In the following four theorems, we generalize Theorems 1 through 4 of \cite{kminvariantmanifolds}. Particularly, we relax the assumption that the map must be strongly competitive.
\begin{thm}
Let $T$ be a competitive map on a rectangular region $R\subset \mathbb{R}^{2}$ where
$$T\left(\begin{array}{cc}
          x\\
y\\
         \end{array}
\right)= \left(\begin{array}{cc}
f(x,y)\\
g(x,y)\\
         \end{array}
\right).$$ Let $\bar{z}\in R$ be a fixed point of $T$ such that $\Delta := R\cap int\left( Q_{1}(\bar{z})\cup Q_{3}(\bar{z})\right)$ is nonempty (i.e. $\bar{z}$ is not the NW or SE vertex of $R$). Further, assume that $T$ is $C^{1}$ and one of the following holds:
\begin{enumerate}
\item $\frac{\partial f}{\partial x}|_{(s,t)}>0$ and $\frac{\partial g}{\partial y}|_{(s,t)}>0$ for all $(s,t)\in \Delta$.
\item $\frac{\partial f}{\partial y}|_{(s,t)}<0$ and $\frac{\partial g}{\partial x}|_{(s,t)}<0$ for all $(s,t)\in \Delta$.
\end{enumerate}
 Suppose that the  Jacobian $J_{T}(\bar{z})$ of $T$ at $\bar{z}$ has real eigenvalues $\lambda , \mu$ such that $0<|\lambda|<\mu$, where $|\lambda|<1$, and the eigenspace $E^{\lambda}$ associated with $\lambda$ is not a coordinate axis.
Then there exists a curve $C\subset R$ through $\bar{z}$ that is invariant and a subset of the basin of attraction of $\bar{z}$, such that $C$ is tangential to the eigenspace $E^{\lambda}$ at $\bar{z}$, and $C$ is the graph of a strictly increasing continuous function of the first coordinate on an interval. 
Moreover, the set consisting of the endpoints of $C$ in the interior of $R$ is forward invariant.
\end{thm}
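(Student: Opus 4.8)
The plan is to follow the architecture of Kulenovi\'c and Merino's argument — build $C$ first as a local invariant curve at $\bar{z}$, then extend it globally — but to replace every use of strong competitiveness by the weaker interior conditions (1) or (2), exploiting the fact that these hold precisely on $\Delta$, which is exactly where $C\setminus\{\bar{z}\}$ will live. First I would normalize $\bar{z}$ to the origin and read off the sign pattern of the Jacobian $J_{T}(\bar{z})$ from competitiveness: $\frac{\partial f}{\partial x},\frac{\partial g}{\partial y}\ge 0$ and $\frac{\partial f}{\partial y},\frac{\partial g}{\partial x}\le 0$. For a $2\times 2$ matrix with this sign pattern and real eigenvalues $\lambda<\mu$, computing the eigenvector slope $(\lambda-\frac{\partial f}{\partial x})/\frac{\partial f}{\partial y}$ and using $\frac{\partial f}{\partial y}\,\frac{\partial g}{\partial x}\ge 0$ shows $\lambda\le\frac{\partial f}{\partial x}$, so $E^{\lambda}$ lies in the closed first/third quadrant and $E^{\mu}$ has nonpositive slope. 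The hypothesis that $E^{\lambda}$ is not a coordinate axis then upgrades this to a strictly positive slope for $E^{\lambda}$; this is the feature that will ultimately force $C$ to be the graph of an increasing function.

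Next, using the spectral gap $|\lambda|<\mu$ together with $|\lambda|<1$, I would invoke a $C^{1}$ (strong) stable manifold theorem to obtain a local invariant curve $C_{\mathrm{loc}}$ through $\bar{z}$ tangent to $E^{\lambda}$, along which $T$ contracts; hence $C_{\mathrm{loc}}$ lies in the basin of attraction of $\bar{z}$. Because $C_{\mathrm{loc}}$ is tangent to the positively sloped line $E^{\lambda}$, after shrinking, $C_{\mathrm{loc}}$ is the graph of a strictly increasing $C^{1}$ function on a small interval, and $C_{\mathrm{loc}}\setminus\{\bar{z}\}\subset\mathrm{int}(Q_{1}(\bar{z})\cup Q_{3}(\bar{z}))$, so it sits inside $\Delta$ where conditions (1)/(2) apply.

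The heart of the argument is the global step, where the order structure replaces strong competitiveness. A short computation shows that either (1) or (2) forces $T$ to be injective along $\preceq_{se}$-chains in $\Delta$: if $p\prec_{se}q$ are distinct points of $\Delta$, then $x_{p}<x_{q}$ or $y_{p}>y_{q}$, and under (1) the former gives $f(p)<f(q)$ while the latter gives $g(p)>g(q)$, so $T(p)\neq T(q)$; condition (2) yields the same conclusion using the strictness of $\frac{\partial f}{\partial y}$ and $\frac{\partial g}{\partial x}$. With this in hand I would prove that distinct points of the invariant curve are $\preceq_{se}$-incomparable: if $p\prec_{se}q$ lay on the curve, monotonicity gives $T^{n}p\preceq_{se}T^{n}q$ for all $n$, injectivity keeps them distinct, yet for large $n$ both $T^{n}p,T^{n}q$ enter $C_{\mathrm{loc}}$, on whose increasing graph they cannot be $\preceq_{se}$-comparable — a contradiction. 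Incomparability in $\preceq_{se}$ is exactly the statement that the two coordinates are ordered alike, so $C$ is the graph of an increasing function; I would then extend this graph maximally over an interval while preserving invariance and containment in $R$, and conclude $C\subseteq$ basin of attraction by a monotone-convergence argument along the curve.

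Finally, for the endpoint statement, let $e$ be an endpoint of $C$ lying in $\mathrm{int}(R)$; invariance $T(C)\subseteq C$ and continuity give $T(e)=\lim_{w\to e}T(w)$ with $w$ running through $C$, so $T(e)$ is an extreme point of $T(C)\subseteq C$, and maximality of the extension prevents it from being interior to the graph, forcing $T(e)$ to be an endpoint as well. The main obstacle I anticipate is precisely the global increasing-graph step: showing that the maximal extension never turns back or acquires a vertical tangent, and that all the strict-comparison inputs used by Kulenovi\'c and Merino can be recovered from the injectivity-on-chains afforded by (1)/(2) alone — which is available only on $\Delta$, so every strict-monotonicity argument must be localized away from the corner $\bar{z}$ and then pieced together with the tangency information at $\bar{z}$.
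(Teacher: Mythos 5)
Your construction of $C$ and the proof that it is an increasing continuous graph contained in the basin of attraction track the paper's argument closely: a local stable manifold at $\bar{z}$ tangent to the positively sloped $E^{\lambda}$, extension via the connected component of the union of preimages, and the observation that either (1) or (2) turns $\preceq_{se}$-comparability of distinct points into comparability of distinct images, which collides with the incomparability of points on the local manifold. That part is sound and is essentially the paper's proof.

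The genuine gap is in the endpoint step, which you yourself flag as the main obstacle but then dispatch too quickly. You argue that if $e$ is an endpoint of $C$ lying in $\mathrm{int}(R)$ then $T(e)$ is an ``extreme point of $T(C)\subseteq C$'' and that ``maximality of the extension prevents it from being interior to the graph.'' This does not follow: $T(C)$ is in general a proper subarc of $C$, and an extreme point of a subarc can perfectly well be an interior point of $C$; maximality of $C$ as an invariant (or preimage-saturated) set says nothing directly about where $T$ sends $e$. The actual work in the paper is to assume $T(e)=(j_{2},f(j_{2}))$ with $j_{2}$ interior to the interval $J$ and derive a contradiction with the component-maximality of $C$: since $C$ separates the smallest rectangle $R_{1}$ containing it and $(j_{2},f(j_{2}))\in \mathrm{int}(R_{1})$, one takes short vertical segments $L_{t}$ just beyond $e$ whose two endpoints map, by continuity and the $\preceq_{se}$-linear ordering of $T(L_{t})$, into the two different components of $R_{1}\setminus C$; an intermediate-value argument then produces a point of $T^{-1}(C)$ on each $L_{t}$, extending the graph of $C$ over a strictly larger interval and contradicting the choice of $C$ as a connected component of $\bigcup_{i\geq 0}T^{-i}(\hat{C})$. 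You would need to supply this (or an equivalent) argument rather than appeal to maximality in one line. You also omit the second half of the endpoint claim: having shown $T(e)$ is an endpoint of $C$, one must still rule out $T(e)\in\partial R$, which the paper does by showing $T(\mathrm{int}(R))\subset \mathrm{int}(R)$ using a comparable triple $y\preceq_{se}e\preceq_{se}z$ on a line of slope $-1$ together with the strict inequalities (1)/(2) to force $T(y)$ and $T(z)$ into opposite open quadrants about $T(e)$.
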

\begin{proof}
The proof is based on Theorem 1 of \cite{kminvariantmanifolds} but has a slightly different presentation. Take $\bar{z}\in R$. We have assumed that $\Delta$ is nonempty and we have made appropriate assumptions so that Lemma 5.1 in p. 234 of \cite{hartman1964} applies. Using Lemma 5.1 in p. 234 of \cite{hartman1964}, we get that there exists a small neighborhood $V$ of $\bar{z}$ and a locally invariant $C^{1}$ manifold $\hat{C}\subset V$ that is tangential to $E^{\lambda}$ at $\bar{z}$ and such that $T^{n}(x)\rightarrow \bar{z}$ for all $x\in\hat{C}$. 
Since $T$ is competitive, a unit eigenvector $v^{\lambda}$ associated with $\lambda$ may be chosen so that $v^{\lambda}$ has nonnegative entries. By our assumption regarding the structure of the Jacobian at $\bar{z}$, the vector $v^{\lambda}$ has positive entires. If necessary, since $T$ is $C^{1}$, the diameter of $V$ may be taken to be small enough to guarantee that no two points on $\hat{C}$ are comparable in the ordering $\preceq _{se}$. 
This can be done since $T$ is $C^{1}$ and the tangential vector $v^{\lambda}$ has positive entries. Let $C$ be the connected component of the union of all preimages of $\hat{C}$ that contains $\hat{C}$. More precisely, $C$ is the connected component of $\left\{\bigcup^{\infty}_{i=0}T^{-i}(\hat{C})\right\}$ containing $\hat{C}$. The set $C$ consists of noncomparable points in the order $\preceq _{se}$. Indeed, if $v$ and $w$ are two distinct points in $C$ such that $v \preceq _{se} w$, 
then $T^{n}(v)\preceq _{se} T^{n}(w)$ and $T^{n}(v)\neq T^{n}(w)$ for $n\geq 0$ because of either (1) or (2). But for $n$ large enough, both $T^{n}(v)$ and $T^{n}(w)$ belong to $\hat{C}$, which consists of noncomparable points, a contradiction. Hence, $C$ consists of noncomparable points. The projection of $C$ onto the first coordinate is a connected set, thus it is an interval $J\subset \mathbb{R}$. 
Since points on $C$ are noncomparable, $C$ is the graph of a strictly increasing function $f(t)$ of $t\in J$. The projection of $C$ onto the second coordinate is a connected set, thus it is an interval. If there is a jump discontinuity at $t_{0}\in J$, then this contradicts the connectedness of $C$. Thus, $f(t)$ is a continuous function.\par
$C$ is the graph of a continuous strictly increasing function from $J$ to $\mathbb{R}$ and, while we have not yet demonstrated $C$ to be a $C^{1}$ manifold, we know for a fact that it is a continuous curve in the plane. In order to establish the desired result we need only show that the set consisting of the endpoints (or perhaps endpoint) of this curve which lie in the interior of $R$ is invariant. If all of the endpoints of this curve lie in the boundary of $R$, then the proof is complete.
Otherwise, take $(j_{1},f(j_{1}))$ to be a generic endpoint which does not lie in the boundary of $R$. We will assume, without loss of generality, that $j_{1}$ is the right endpoint of $J$. The other case follows similarly. Assume, for the sake of contradiction, that $T((j_{1},f(j_{1})))$ is not an endpoint of $C$, then, by continuity and the definition of $C$, we get that there is some $j_{2}\in J$, which is not an endpoint of $J$, so that $T((j_{1},f(j_{1})))=(j_{2},f(j_{2}))$. Let $R_{1}$ be the smallest rectangle in $R$ containing $C$.
Note that $C$ is a separartix for $R_1$. Moreover, since $j_{2}$ is in the interior of $J$, $(j_{2},f(j_{2}))$ is in the interior of $R_1$. Thus, there is a sufficiently small $\epsilon>0$ so that the ball of radius $\epsilon$ about $(j_{2},f(j_{2}))$, henceforth denoted $B((j_{2},f(j_{2})),\epsilon)$, lies entirely in $R_{1}$.

By continuity of $T$, there exists $\delta >0$ such that $T(B((j_{1},f(j_{1})),\delta))\subset B\left((j_{2},f(j_{2})),\frac{\epsilon}{2}\right)$. 
Consider the line segment $L_{0}$ with endpoints $\left(j_{1},f(j_{1})\pm \frac{\delta}{2} \right)$. 
Since $L_{0}$ is linearly ordered by $\preceq _{se}$, so is $T(L_{0})$, and the points $T\left(\left(j_{1},f(j_{1})+ \frac{\delta}{2} \right)\right)$ and $T\left(\left(j_{1},f(j_{1})- \frac{\delta}{2} \right)\right)$ are on different components of $R_{1}\setminus C$. 
Find $\epsilon_{2}>0$ such that both $B\left(T\left(\left(j_{1},f(j_{1}) + \frac{\delta}{2}\right) \right),\epsilon_{2}\right)$, and $B\left(T\left(\left(j_{1},f(j_{1}) - \frac{\delta}{2}\right) \right),\epsilon_{2}\right)$ are each a subset of a different component of $R_{1}\setminus C$ and are both subsets of $B((j_{2},f(j_{2})),\epsilon)$. Now, choose $\eta >0$ such that both
$$T\left[B\left(\left(j_{1},f(j_{1})+ \frac{\delta}{2}\right),\eta\right)\right]\subset B\left(T\left[\left(j_{1},f(j_{1})+\frac{\delta}{2} \right)\right],\epsilon_{2}\right),$$
and
$$T\left[B\left(\left(j_{1},f(j_{1})- \frac{\delta}{2}\right),\eta\right)\right]\subset B\left(T\left[\left(j_{1},f(j_{1})-\frac{\delta}{2} \right)\right],\epsilon_{2}\right).$$
Now, for each $t\in (0,\eta)$ consider the line segment $L_{t}$ with endpoints $p_{\pm}(t):=\left(j_{1}+t,f(j_{1})\pm \frac{\delta}{2}\right)$.
Notice that for each $t\in (0,\eta)$, $T(p_{+}(t))$ and $T(p_{-}(t))$ belong to different components of $R_{1}\setminus C$ yet both belong to $B\left((j_{2},f(j_{2})),\epsilon\right)$.
For each $t\in (0,\eta)$, the line segment $L_{t}$ is linearly ordered by $ \preceq _{se}$, hence so is $T(L_{t})$. Since $T(p_{+}(t))$ and $T(p_{-}(t))$ both belong to $int(R_{1})$ and $T(L_{t})$ is ordered by $ \preceq _{se}$, $T(L_{t})\subset int(R_{1})$. 
Thus, by continuity of $T$, for each $t\in (0,\eta)$ there exists $x_{t}\in L_{t}$ such that $T(x_{t})\in C$. That is, the function $f(x)$ may be extended to a function $\hat{f}(x)$ defined on an interval $\hat{J}$ that includes $J$ as a proper subset.
The reasoning used to show monotonicity of $f(x)$ gives monotonicity of $\hat{f}(x)$. If there is a jump discontinuity at $t_{0}\in (0,\eta)$, let $y_{-}$ and $y_{+}$ respectively be the left and right (distinct) limits of $\hat{f}(x)$ as $x$ approaches $j_{1}+t_{0}$ respectively. The points $(j_{1}+t_{0},y_{-})$ and $(j_{1}+t_{0},y_{+})$ are comparable in the order $\preceq _{se}$ and because of either (1) or (2), 
$T(j_{1}+t_{0},y_{-})\preceq _{se} T(j_{1}+t_{0},y_{+})$ and $T(j_{1}+t_{0},y_{+})-T(j_{1}+t_{0},y_{-})\neq (0,0)$. Since both $T(j_{1}+t_{0},y_{+})$ and $T(j_{1}+t_{0},y_{-})$ are accumulation points of $C$ and in $int(R_{1})$, they are in $C$. Thus, we obtain that $C$ must have comparable points, a contradiction. Thus $\hat{f}(t)$ is a continuous function. This contradicts the choice of $C$ as a connected component, and we conclude that there can be no such endpoint $(j_{1},f(j_{1}))$.
So, we have narrowed down the possibilities for $T(j_{1},f(j_{1}))$. We now know that $T(j_{1},f(j_{1}))$ must be an endpoint of $C$. If we can now rule out the possibility that $T(j_{1},f(j_{1}))$ is in the boundary of $R$, then we will have shown that the set consisting of the endpoints (or perhaps endpoint) of $C$ which lie in the interior of $R$ is invariant, as the statement of the theorem claims.
However, this possibility is easily taken care of. In fact, we may show that $T(x_{0})\not\in \partial R$ for any $x_{0}\in int(R)$. To see this, consider points $y$ and $z$ in $int(R)$ so that the points $y$, $x_{0}$, and $z$ lie on a line with slope $-1$ containing $x_{0}$, so that $y \preceq _{se} x_{0} \preceq _{se} z$ with all three points distinct. We may take such points since $x_{0}\in int(R)$ by choosing $y$ and $z$ sufficiently close to $x_{0}$. 
Then, by either (1) or (2) and the fact that $T$ is competitive, $T(y) \preceq _{se} T(x_{0}) \preceq _{se} T(z)$ with all three points distinct. More precisely, $T(y)\in int(Q_{2}(T(x_{0})))$ and $T(z)\in int(Q_{4}(T(x_{0})))$, and, in fact, the assumptions (1) and (2) were designed for this purpose. So, if $x_{0}\in\partial R$, then we are forced to conclude that one of the points $T(y)$ or $T(z)$ does not belong to $R$, contradicting the assumption that $R$ is forward invariant under $T$.
So, we have now shown that the set consisting of endpoints of $C$ which are not in the boundary of $R$ is forward invariant under $T$ and the theorem is proved. \par

\end{proof}

\begin{thm}
For the curve $C$ of Theorem 1 to have endpoints in $\partial R$, it is sufficient that at least one of the following conditions is satisfied.
\begin{enumerate}[(i)]
\item The map $T$ has no fixed points nor periodic points of minimal period two in $\Delta$.
\item The map $T$ has no fixed points in $\Delta$, $det J_{T}\left(\bar{z}\right)>0$, and $T(x)=\bar{z}$ has no solutions $x\in\Delta$.
\item The map $T$ has no fixed points in $\Delta$, $det J_{T}\left(\bar{z}\right)<0$, and $T(x)=\bar{z}$ has no solutions $x\in\Delta$.
\end{enumerate}
\end{thm}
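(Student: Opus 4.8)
The plan is to argue by contradiction: suppose the curve $C$ of Theorem 1 has an endpoint in $\mathrm{int}(R)$, and derive a violation of each of (i)--(iii). Let $E$ be the set of endpoints of $C$ lying in $\mathrm{int}(R)$. Since $C$ is the graph of a strictly increasing function over an interval, $E$ has at most two elements, and each lies in $\Delta$ (because $C\setminus\{\bar z\}$ sits in $\mathrm{int}(Q_1(\bar z)\cup Q_3(\bar z))$, the graph being strictly increasing through $\bar z$). By Theorem 1 the set $E$ is forward invariant, so $T|_E$ is a self-map of a set of cardinality at most two. The first step is the elementary observation that in such a set every element is periodic of period one or two; hence each point of $E$ is either a fixed point of $T$ in $\Delta$ or a point of a minimal period-two orbit of $T$ in $\Delta$. (Note that such a period-two orbit is automatically noncomparable in $\preceq_{se}$: if $p_1\preceq_{se}p_2$ then applying $T$ gives $p_2\preceq_{se}p_1$, forcing $p_1=p_2$.)

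Condition (i) forbids both alternatives in $\Delta$, so under (i) we get $E=\emptyset$ at once, which is the claim. For (ii) and (iii) the hypothesis that $T$ has no fixed point in $\Delta$ removes the fixed-point alternative, so it remains to exclude the case $E=\{p_1,p_2\}$ with $T(p_1)=p_2$, $T(p_2)=p_1$, the points $p_1,p_2$ being the two endpoints of $C$. I would handle this by reading the behavior of $T$ on the two arcs of $C$ meeting at $\bar z$ off the sign of $\det J_T(\bar z)$. Write $C^+$ and $C^-$ for the (open) arcs of $C$ on which the first coordinate exceeds, respectively falls below, that of $\bar z$; these run to $p_2$ and to $p_1$. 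The hypothesis that $T(x)=\bar z$ has no solution in $\Delta$ guarantees that $\bar z$ has no preimage on $C\setminus\{\bar z\}$, so each image $T(C^\pm)$ avoids $\bar z$ and, being connected, lies entirely in $C^+$ or entirely in $C^-$. Since the tangent $v^\lambda$ has positive entries and $\mu>0$, the map induced by $T$ on $C$ has derivative $\lambda$ at $\bar z$, and $\lambda$ and $\det J_T(\bar z)=\lambda\mu$ share the same sign; thus $T$ preserves $C^+,C^-$ when $\det J_T(\bar z)>0$ and interchanges them when $\det J_T(\bar z)<0$.

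In case (ii), $\det J_T(\bar z)>0$, so $T(C^+)\subseteq C^+$, hence $T(p_2)\in\overline{C^+}$; but $T(p_2)$ is an endpoint of $C$ and the only endpoint in $\overline{C^+}$ is $p_2$ itself, so $T(p_2)=p_2$ is a fixed point in $\Delta$, contradicting the hypothesis. Therefore $E=\emptyset$, completing case (ii).

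The main obstacle is case (iii). Here $\det J_T(\bar z)<0$ forces $T$ to \emph{interchange} $C^+$ and $C^-$, so the endpoint relation $T(p_2)=p_1$, $T(p_1)=p_2$ is consistent with the local orientation and the dynamics on $C$ alone cannot exclude the swap; one is left with a genuine minimal period-two orbit at the endpoints, which the standing hypotheses do not forbid directly. The natural route is to pass to the second iterate $T^2$: it is again competitive (the product of two matrices with the competitive sign pattern again has that pattern), it fixes $p_1$ and $p_2$, and at $\bar z$ it has the same eigenspaces with eigenvalues $\lambda^2\in(0,1)$ and $\mu^2$, so $\det J_{T^2}(\bar z)=(\det J_T(\bar z))^2>0$; one then wants to rerun the case-(ii) argument for $T^2$. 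The delicate point, and where the argument requires the most care, is that $T^2$ must be shown to inherit the hypotheses: competitiveness is immediate, but the strict sign condition of type (1) or (2) on $\Delta$ for the composition must be checked, and above all the no-preimage condition must be lifted, i.e.\ that $T^2(x)=\bar z$ has no solution in $\Delta$. This last does not follow formally from the statement for $T$, since an intermediate image $T(x)$ might leave $\Delta$; reconciling the interchange of arcs forced by $\det J_T(\bar z)<0$ with the exclusion of the endpoint period-two orbit---by exploiting the competitive structure off $C$ together with the no-preimage hypothesis---is the heart of the proof.
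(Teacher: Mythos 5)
Your reduction is exactly the paper's: the set $E$ of interior endpoints is forward invariant by Theorem 1, has cardinality at most two, and hence is empty, a fixed point, or a minimal period-two orbit; the no-fixed-point hypothesis kills the second alternative and case (i) is immediate. Your argument for case (ii) (the image of each arc $C^{\pm}$ misses $\bar z$ by the no-preimage hypothesis, hence lies in a single arc; $\lambda>0$ forces $T(C^{+})\subset C^{+}$; so $T(p_2)\in\overline{C^{+}}$ must equal $p_2$, a forbidden fixed point) is sound and is in fact more explicit than the paper, which at this point simply asserts that the proof of Theorem 2 of \cite{kminvariantmanifolds} goes through verbatim because strong competitivity is never used there.

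The genuine gap is case (iii), and you have diagnosed it correctly but not closed it. With $\det J_T(\bar z)<0$ one has $\lambda<0$, $T$ interchanges $C^{+}$ and $C^{-}$, and the endpoint swap $T(p_1)=p_2$, $T(p_2)=p_1$ is perfectly consistent with everything you have established; as you note, the induced dynamics on the arc $C$ alone (a decreasing interval map fixing $\bar z$ and swapping the ends, with all orbits converging to $\bar z$) admits no contradiction. Moreover the repair you sketch via $T^2$ cannot work as stated: under the assumption being contradicted, $p_1$ and $p_2$ \emph{are} fixed points of $T^2$ in $\Delta$, so the "no fixed points in $\Delta$" hypothesis fails for $T^2$, and rerunning the case-(ii) argument for $T^2$ would only yield $T^2(p_2)=p_2$ --- which is exactly the situation you assumed, not a contradiction. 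Excluding the period-two endpoint orbit in case (iii) requires an additional input exploiting the competitive (order) structure off the curve $C$ together with the no-preimage condition, which is precisely the content of the argument in \cite{kminvariantmanifolds} that the paper invokes; your proposal does not supply it, so case (iii) remains unproved.
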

\begin{proof}
We have shown, in Theorem 1, that the set consisting of endpoints of $C$ which are not in the boundary of $R$ is forward invariant under $T$. This set has cardinality at most two and so either it is empty, it has a fixed point, or it has a minimal period 2 point. In the above hypotheses (i), (ii) and (iii), we assume that there are no fixed points in $\Delta$, ruling out the possibility of a fixed point. So, the set consisting of endpoints of $C$ which are not in the boundary of $R$ is either empty or is a minimal period two orbit.
To prove the theorem we need only rule out the case of a minimal period 2 orbit. Case (i) is obvious. For the remaining cases, the proof proceeds along the same lines as the proof of Theorem 2 of \cite{kminvariantmanifolds}. 
The proof is nearly identical with the only exception being that Theorem 1 of this article replaces Theorem 1 of \cite{kminvariantmanifolds} everywhere Theorem 1 is mentioned or used. 
Such a replacement is valid because the proof of Theorem 2 of \cite{kminvariantmanifolds} does not make use of the assumption in Theorem 1 of \cite{kminvariantmanifolds} that $T$ is strongly competitive.
\end{proof}

\begin{thm}
 Under the hypotheses of Theorem 1, suppose there exists a neighborhood $U$ of $\bar{z}$ in $\mathbb{R}^{2}$ such that $T$ is of class $C^{k}$ on $U\cup \Delta$ for some $k\geq 1$, and that the Jacobian of $T$ at each $x\in\Delta$ is invertible. Then the curve $C$ in the conclusion of Theorem 1 is of class $C^{k}$.
\end{thm}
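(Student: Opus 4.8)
The plan is to reduce the global smoothness claim to a local one and then propagate the $C^{k}$ regularity of the local stable manifold $\hat{C}$ along orbits, using that $T$ is a local $C^{k}$ diffeomorphism wherever its Jacobian is invertible. Since $C$ is already known from Theorem 1 to be the graph of a continuous strictly increasing function on an interval $J$, it suffices to show that $C$ is $C^{k}$ in a neighborhood of each of its interior points. First I would upgrade the local manifold $\hat{C}$ to class $C^{k}$: the eigenvalue hypothesis gives $\det J_{T}(\bar{z})=\lambda\mu\neq 0$, so $J_{T}(\bar{z})$ is invertible and, since $T$ is $C^{k}$ on the full $\mathbb{R}^{2}$-neighborhood $U$, the $C^{k}$ version of the local stable manifold theorem (the $C^{k}$ refinement of Lemma 5.1 on p.~234 of \cite{hartman1964}) yields that $\hat{C}$ may be taken to be a $C^{k}$ manifold tangential to $E^{\lambda}$ at $\bar{z}$. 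Thus $C$ is $C^{k}$ on the portion coinciding with $\hat{C}$ near $\bar{z}$.

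Next, by the inverse function theorem, the hypothesis that $J_{T}(x)$ is invertible for every $x\in\Delta$ together with $T$ being $C^{k}$ on $U\cup\Delta$ shows that $T$ restricts to a local $C^{k}$ diffeomorphism on a neighborhood of each point of $\Delta$. I would then observe that every interior point of $C$ other than $\bar{z}$ lies in $\Delta$: because $C$ is strictly increasing through $\bar{z}$, any point of $C$ with first coordinate exceeding that of $\bar{z}$ lies in $int(Q_{1}(\bar{z}))$ and any point with smaller first coordinate lies in $int(Q_{3}(\bar{z}))$, while interior points of the curve lie in the interior of $R$.

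Now fix an interior point $p\in C$. Since $C\subset\bigcup_{i\geq 0}T^{-i}(\hat{C})$ by construction, there is a least $N\geq 0$ with $T^{N}(p)\in\hat{C}$. By invariance of $C$ the intermediate points $p,T(p),\dots,T^{N-1}(p)$ all lie in $C$, hence (being distinct from $\bar{z}$) in $\Delta$, so $T$ is a local $C^{k}$ diffeomorphism at each of them. By the chain rule $J_{T^{N}}(p)$ is invertible, so $T^{N}$ is itself a local $C^{k}$ diffeomorphism near $p$; let $\Phi$ denote its $C^{k}$ local inverse defined near $T^{N}(p)$. Then $\Phi$ carries the $C^{k}$ curve $\hat{C}$, near $T^{N}(p)$, to a $C^{k}$ arc through $p$ contained in $T^{-N}(\hat{C})$.

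It remains to identify this smooth arc with $C$ near $p$. Because $T^{N}$ is a diffeomorphism on the chosen neighborhood, the full preimage $T^{-N}(\hat{C})$ there is the single sheet $\Phi(\hat{C})$; since $C$ is a connected curve through $p$ contained in $\bigcup_{i}T^{-i}(\hat{C})$ and locally contained in $T^{-N}(\hat{C})$, it must coincide with $\Phi(\hat{C})$ in a neighborhood of $p$, whence $C$ is $C^{k}$ near $p$, and as $p$ was arbitrary, $C$ is $C^{k}$. The main obstacle I anticipate is precisely this last matching step: one must ensure that the a priori only continuous global curve $C$ locally fills out the whole smooth sheet $\Phi(\hat{C})$ rather than a proper subset of it, which is where the strict monotonicity of both $C$ and $\Phi(\hat{C})$ (both graphs of increasing functions) together with the local injectivity of $T^{N}$ are used to rule out branching.
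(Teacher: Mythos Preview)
Your argument is correct and is precisely the standard pull-back argument one would expect: upgrade $\hat{C}$ to class $C^{k}$ via the $C^{k}$ local stable manifold theorem, then use invertibility of $J_{T}$ on $\Delta$ and the inverse function theorem to pull back the regularity of $\hat{C}$ along finitely many iterates to any interior point of $C$. Your handling of the matching step is fine once you note (as you essentially do) that $\Phi(\hat{C}\cap W)$, being a connected subset of $\bigcup_{i}T^{-i}(\hat{C})$ through $p$, must lie in the connected component $C$; since $C$ is a graph, this forces $\Phi(\hat{C}\cap W)$ to coincide with the portion of that graph over an open subinterval containing the first coordinate of $p$, giving the $C^{k}$ chart there.

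The paper, by contrast, does not present a self-contained proof at all: it simply observes that the proof of Theorem~3 in Kulenovi\'c--Merino \cite{kminvariantmanifolds} goes through verbatim after replacing their Theorem~1 by the present Theorem~1, since that argument never uses strong competitiveness. What you have written is almost certainly the content of the cited proof spelled out in detail, so your approach is not ``different'' so much as explicit where the paper is deferential. The only minor remark is that you could streamline the matching step by arguing containment $\Phi(\hat{C}\cap W)\subset C$ first (via connectedness of the component), after which the identification of the two graphs near $p$ is immediate; this removes any need to invoke monotonicity of $\Phi(\hat{C})$ separately.
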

\begin{proof}
The proof proceeds along the same lines as the proof of Theorem 3 of \cite{kminvariantmanifolds}. 
The proof is nearly identical with the only exception being that Theorem 1 of this article replaces Theorem 1 of \cite{kminvariantmanifolds} everywhere Theorem 1 is mentioned or used. 
Such a replacement is valid because the proof of Theorem 3 of \cite{kminvariantmanifolds} does not make use of the assumption in Theorem 1 of \cite{kminvariantmanifolds} that $T$ is strongly competitive.
\end{proof}

\begin{thm}
\begin{enumerate}[(A)]
\item Assume the hypotheses of Theorem 1, and let $C$ be the curve whose existence is guaranteed by Theorem 1. If the endpoints of $C$ belong to $\partial R$, then $C$ separates $R$ into two connected components, namely
$$W_{-}:=\{x\in R\setminus C : \exists y\in C \;\; with \;\; x\preceq _{se} y \}$$
and
$$W_{+}:=\{x\in R\setminus C : \exists y\in C \;\; with \;\; y\preceq _{se} x \},$$
such that the following statements are true.
\begin{enumerate}[(i)]
\item $W_{-}$ is invariant, and $dist\left(T^{n}(x),Q_{2}(\bar{z})\right)\rightarrow 0$ as $n\rightarrow \infty$ for every $x\in W_{-}.$
\item $W_{+}$ is invariant, and $dist\left(T^{n}(x),Q_{4}(\bar{z})\right)\rightarrow 0$ as $n\rightarrow \infty$ for every $x\in W_{+}.$
\end{enumerate}
\item If, in addition to the hypotheses of part (A), $\bar{z}$ is an interior point of $R$, $T$ is $C^{2}$ in a neighborhood of $\bar{z}$, and one of the following holds:
\begin{enumerate}[(1)]
\item $\frac{\partial f}{\partial x}|_{(s,t)}>0$, $\frac{\partial f}{\partial y}|_{(s,t)}<0$, and $\frac{\partial g}{\partial x}|_{(s,t)}<0$ for all $(s,t)$ in a neighborhood of $\bar{z}$.
\item $\frac{\partial f}{\partial y}|_{(s,t)}<0$, $\frac{\partial g}{\partial x}|_{(s,t)}<0$, and $\frac{\partial g}{\partial y}|_{(s,t)}>0$ for all $(s,t)$ in a neighborhood of $\bar{z}$.
\end{enumerate}
Then $T$ has no periodic points in the boundary of $Q_{1}(\bar{z}) \cup Q_{3}(\bar{z})$ except for $\bar{z}$, and the following statements are true.

\begin{enumerate}[(iii)]
\item For every $x\in W_{-}$ there exists $n_{0}\in\mathbb{N}$ such that $T^{n}(x)\in int(Q_{2}(\bar{z}))$ for $n\geq n_{0}$.
\end{enumerate}

\begin{enumerate}[(iv)]
\item For every $x\in W_{+}$ there exists $n_{0}\in\mathbb{N}$ such that $T^{n}(x)\in int(Q_{4}(\bar{z}))$ for $n\geq n_{0}$.
\end{enumerate}

\end{enumerate}

\end{thm}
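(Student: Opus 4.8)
The plan is to treat parts (A) and (B) separately, since the separation, invariance, and convergence claims in (A) follow quickly from Theorem 1, whereas part (B) carries the genuinely new work. Throughout write $\bar{z}=(\bar{z}_{1},\bar{z}_{2})$. For part (A) I would first establish the separation: since $C$ is the graph of a continuous strictly increasing function whose endpoints lie on $\partial R$, it is an arc crossing the rectangle $R$, so a standard Jordan-arc argument shows that $R\setminus C$ has exactly two connected components. I would then identify these with $W_{-}$ and $W_{+}$, noting that a point lying northwest of the graph is $\preceq_{se}$ some point of $C$ while a point lying southeast of it dominates some point of $C$, so the two sets cut out by the $\preceq_{se}$-conditions are precisely the two components. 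Invariance is then immediate from order preservation: if $x\in W_{-}$, choose $y\in C$ with $x\preceq_{se}y$; since $C$ is invariant and $T$ preserves $\preceq_{se}$, we get $T(x)\preceq_{se}T(y)\in C$, and the strict monotonicity furnished by hypothesis (1) or (2) of Theorem 1 forces $T(x)\neq T(y)$, whence $T(x)\notin C$ (as $C$ has no distinct comparable points) and $T(x)\in W_{-}$. The case of $W_{+}$ is symmetric.

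The convergence statements (i) and (ii) then follow by a squeeze in the $\preceq_{se}$ order. For $x\in W_{-}$ pick $y\in C$ with $x\preceq_{se}y$; applying $T^{n}$ gives $T^{n}(x)\preceq_{se}T^{n}(y)$, and since $C$ lies in the basin of attraction of $\bar{z}$ (Theorem 1) we have $T^{n}(y)\to\bar{z}$. Reading off coordinates, the first coordinate of $T^{n}(x)$ is eventually at most $\bar{z}_{1}+\varepsilon$ and its second coordinate eventually at least $\bar{z}_{2}-\varepsilon$, which bounds $dist(T^{n}(x),Q_{2}(\bar{z}))$ by a fixed multiple of $\varepsilon$; hence $dist(T^{n}(x),Q_{2}(\bar{z}))\to 0$. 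Statement (ii) is identical using a lower witness $y\preceq_{se}x$ and $Q_{4}(\bar{z})$. I would stress that this direction uses only order preservation and the basin property, not any form of strong competitiveness.

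For part (B) I would first prove that $T$ has no periodic points on the two coordinate lines through $\bar{z}$, which make up $\partial(Q_{1}(\bar{z})\cup Q_{3}(\bar{z}))$, apart from $\bar{z}$. The mechanism is a direct sign chase from the strengthened part-(B) hypotheses (1)/(2): for a point $p\neq\bar{z}$ on the vertical line through $\bar{z}$, the strict signs of $\frac{\partial f}{\partial y}$ and $\frac{\partial g}{\partial y}$ (under (2)) force $T(p)$ strictly into $int(Q_{2}(\bar{z}))$ or $int(Q_{4}(\bar{z}))$ according to whether $p$ lies above or below $\bar{z}$, while the horizontal line is handled by $\frac{\partial g}{\partial x}$ together with the monotonicity of $f$ coming from competitiveness, possibly after a second iterate. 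Since $Q_{2}(\bar{z})$ and $Q_{4}(\bar{z})$ are invariant, an orbit that leaves the axes into an open quadrant cannot return, ruling out periodic points there; condition (1) is symmetric. The same computation shows that $int(Q_{2}(\bar{z}))$ and $int(Q_{4}(\bar{z}))$ are invariant and that every point of $\partial Q_{2}(\bar{z})\setminus\{\bar{z}\}$ is carried into $int(Q_{2}(\bar{z}))$ within one or two steps. Combining this with (i), the orbit of any $x\in W_{-}$ approaches $Q_{2}(\bar{z})$ and must eventually enter and remain in $int(Q_{2}(\bar{z}))$, giving (iii); (iv) is symmetric.

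The main obstacle is the final passage in part (B): moving from $dist(T^{n}(x),Q_{2}(\bar{z}))\to 0$ to actual entry into the \emph{open} quadrant, where one must exclude the orbit creeping indefinitely along the bounding axes or converging to $\bar{z}$ itself along the boundary. This is precisely where the extra diagonal sign in the part-(B) hypotheses and the $C^{2}$ saddle structure at the interior fixed point $\bar{z}$ are needed, since they prevent a $W_{-}$-orbit from accumulating on $\partial Q_{2}(\bar{z})$, and it is the step that parallels most closely the corresponding argument in Theorem 4 of \cite{kminvariantmanifolds}.
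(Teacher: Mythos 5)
Your part (A) is essentially correct, and your direct sign chase showing that every point of $\partial\left(Q_{1}(\bar{z})\cup Q_{3}(\bar{z})\right)\setminus\{\bar{z}\}$ is carried into $int(Q_{2}(\bar{z}))$ or $int(Q_{4}(\bar{z}))$ within two iterates, combined with the invariance of these open quadrants, is a legitimate self-contained way to exclude periodic points on that boundary. The genuine gap is in conclusions (iii) and (iv). You reduce them to the passage from $dist\left(T^{n}(x),Q_{2}(\bar{z})\right)\rightarrow 0$ to eventual entry into the \emph{open} quadrant, you correctly name this as the main obstacle (the orbit could creep along the bounding half-lines, escape to infinity while hugging them, or accumulate only at $\bar{z}$), and then you do not close it: you merely assert that the $C^{2}$ hypothesis and the extra diagonal sign ``prevent'' such accumulation and that the step ``parallels'' the argument in \cite{kminvariantmanifolds}. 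That is exactly the step a proof must supply. Your local analysis applies only near points $p\neq\bar{z}$ of the half-lines, so it says nothing about orbits whose sole accumulation point on $\partial Q_{2}(\bar{z})$ is $\bar{z}$, nor about unbounded orbits asymptotic to the half-lines; and since Theorem 1 only assumes $0<|\lambda|<\mu$ with $|\lambda|<1$, the hypotheses do not even force $\mu>1$, so one cannot casually invoke a ``saddle structure'' at $\bar{z}$.

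The paper closes this step by a device absent from your proposal, and it is the entire purpose of the part-(B) hypotheses (1) and (2): combined with competitiveness, either condition forces the Jacobian of $T$ near $\bar{z}$ to have strictly positive diagonal-product and strictly negative off-diagonal entries after squaring, i.e.\ $T^{2}$ is \emph{strongly competitive} in a neighborhood of $\bar{z}$. One then applies parts (iii) and (iv) of Theorem 4 of \cite{kminvariantmanifolds} directly to $T^{2}$ (whose proof there needs strong competitiveness only near the fixed point, not on all of $\Delta$), obtaining that $T^{2n}(x)\in int(Q_{2}(\bar{z}))$ for all large $n$, and finally uses the invariance of $int(Q_{2}(\bar{z}))$ under $T$ itself to interpolate the odd iterates; the statement about periodic points is likewise inherited from $T^{2}$. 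Without either this reduction to $T^{2}$ or a full reworking of the accumulation argument of \cite{kminvariantmanifolds}, your proof of (iii) and (iv) is incomplete.
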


\begin{proof}
 The proof for (i) and (ii) proceeds along the same lines as the proof for (i) and (ii) of Theorem 4 of \cite{kminvariantmanifolds}. 
The proof is nearly identical with the only exception being that Theorem 1 of this article replaces Theorem 1 of \cite{kminvariantmanifolds} everywhere Theorem 1 is mentioned or used. 
Such a replacement is valid because the proof for (i) and (ii) of Theorem 4 of \cite{kminvariantmanifolds} does not make use of the assumption in Theorem 1 of \cite{kminvariantmanifolds} that $T$ is strongly competitive.\par

For the proof of (iii) and (iv), consider $T^{2}$. Notice that $T^{2}$ satisfies all of the necessary hypotheses needed to apply (iii) and (iv) of Theorem 4 of \cite{kminvariantmanifolds}, except that $T^{2}$ may not be strongly competitive on all of $\Delta$. 
The assumptions (1) and (2) were designed to force $T^{2}$ to be strongly competitive in a neighborhood of $\bar{z}$, allowing Theorem 4 of \cite{kminvariantmanifolds} to be used on $T^{2}$.
The proof for (iii) and (iv) of Theorem 4 of \cite{kminvariantmanifolds} does not make use of the assumption in Theorem 1 of \cite{kminvariantmanifolds} that $T$ is strongly competitive on $\Delta$, however Theorem 4 of \cite{kminvariantmanifolds} does need the map to be strongly competitive in a neighborhood of $\bar{z}$. So, we may apply Theorem 4 of \cite{kminvariantmanifolds} to the map $T^{2}$.\par
When we do so, we get that $T^{2}$ has no periodic points in the boundary of $Q_{1}(\bar{z}) \cup Q_{3}(\bar{z})$ except for $\bar{z}$. Thus, since every periodic point of $T$ is a periodic point of $T^{2}$, $T$ has no periodic points in the boundary of $Q_{1}(\bar{z}) \cup Q_{3}(\bar{z})$ except for $\bar{z}$.
We also get that for every $x\in W_{-}$ there exists $n_{0}\in\mathbb{N}$ such that $T^{2n}(x)\in int(Q_{2}(\bar{z}))$ for $n\geq n_{0}$. Since $int(Q_{2}(\bar{z}))$ is invariant for our map $T$ by virtue of $T$ being competitive and either (1) or (2), we get the following. For every $x\in W_{-}$, there exists $n_{0}\in\mathbb{N}$ such that $T^{n}(x)\in int(Q_{2}(\bar{z}))$ for $n\geq n_{0}$.
We also get that for every $x\in W_{+}$ there exists $n_{0}\in\mathbb{N}$ such that $T^{2n}(x)\in int(Q_{4}(\bar{z}))$ for $n\geq n_{0}$. Since $int(Q_{4}(\bar{z}))$ is invariant for our map $T$ by virtue of $T$ being competitive and either (1) or (2), we get the following. For every $x\in W_{+}$, there exists $n_{0}\in\mathbb{N}$ such that $T^{n}(x)\in int(Q_{4}(\bar{z}))$ for $n\geq n_{0}$.
\end{proof}

\section{The System (11,6)}
\begin{thm}
Consider the following system of rational difference equations
$$x_{n+1}=\frac{\alpha_{1}}{A_{1}+y_{n}},\quad y_{n+1}=\frac{\gamma_{2}y_{n}}{x_{n}},\quad n=0,1,\dots,$$
with $\alpha_1,A_{1},\gamma_{2}>0$ and arbitrary nonnegative initial conditions so that the denominator is never zero.
For this system of rational difference equations there are 3 regions in parametric space with distinct global behavior. The behavior is as follows:
\begin{enumerate}
\item If $\gamma_{2}>\frac{\alpha_{1}}{A_{1}}$, then the unique equilibrium $\left(\frac{\alpha_{1}}{A_{1}},0\right)$ is a saddle point with stable manifold $(0,\infty)\times \{0\}$. Whenever $(x_{0},y_{0})\not\in (0,\infty)\times \{0\}$, then $\lim_{n\rightarrow \infty}(x_{n},y_{n})=(0,\infty)$. 
\item If $\gamma_{2}=\frac{\alpha_{1}}{A_{1}}$, then the unique equilibrium $\left(\frac{\alpha_{1}}{A_{1}},0\right)$ is nonhyperbolic with basin of attraction $(0,\infty)\times \{0\}$. Whenever $(x_{0},y_{0})\not\in (0,\infty)\times \{0\}$, then $\lim_{n\rightarrow \infty}(x_{n},y_{n})=(0,\infty)$. 
\item If $\gamma_{2}<\frac{\alpha_{1}}{A_{1}}$, then there are exactly two equilibria $\left(\frac{\alpha_{1}}{A_{1}},0\right)$ and $\left(\gamma_{2},\frac{\alpha_{1}}{\gamma_{2}}-A_{1}\right)$. The equilibrium $\left(\frac{\alpha_{1}}{A_{1}},0\right)$ is locally asymptotically stable, and the equilibrium $\left(\gamma_{2},\frac{\alpha_{1}}{\gamma_{2}}-A_{1}\right)$ is a saddle point.
Moreover, there is a strictly increasing smooth function $C:(0,\infty)\rightarrow (0,\infty)$, which has the property that its graph $\{(x,y)\in (0,\infty)^{2}|y=C(x)\}$ is the global stable manifold for the saddle point equilibrium $\left(\gamma_{2},\frac{\alpha_{1}}{\gamma_{2}}-A_{1}\right)$ and an invariant separatrix. If $y_{0}>C(x_{0})$, then $\lim_{n\rightarrow \infty}(x_{n},y_{n})=(0,\infty)$. If $y_{0}=C(x_{0})$, then $\lim_{n\rightarrow \infty}(x_{n},y_{n})=\left(\gamma_{2},\frac{\alpha_{1}}{\gamma_{2}}-A_{1}\right)$. If $y_{0} < C(x_{0})$, then $\lim_{n\rightarrow \infty}(x_{n},y_{n})= \left(\frac{\alpha_{1}}{A_{1}},0\right)$. 
\end{enumerate}

\end{thm}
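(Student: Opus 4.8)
The plan is to analyze the map $T(x,y)=\left(\frac{\alpha_1}{A_1+y},\frac{\gamma_2 y}{x}\right)$ on the invariant rectangular region $R=(0,\infty)^2$, noting that $T$ carries $(0,\infty)^2$ into itself and that the half-axis $(0,\infty)\times\{0\}$ is mapped onto $\left(\frac{\alpha_1}{A_1},0\right)$ in a single step. First I would record the equilibria by solving $x(A_1+y)=\alpha_1$ together with $y=\gamma_2 y/x$: the second relation forces $y=0$ or $x=\gamma_2$, which yields $\left(\frac{\alpha_1}{A_1},0\right)$ in all cases and the interior point $\left(\gamma_2,\frac{\alpha_1}{\gamma_2}-A_1\right)$ exactly when $\gamma_2<\frac{\alpha_1}{A_1}$. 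The Jacobian at $\left(\frac{\alpha_1}{A_1},0\right)$ is upper triangular with eigenvalues $0$ and $\frac{\gamma_2 A_1}{\alpha_1}$, which at once separates the three parameter regimes (saddle, nonhyperbolic, locally asymptotically stable) and identifies $(0,\infty)\times\{0\}$ as the stable manifold in Cases (1) and (2).

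The analytic engine for every convergence claim is the scalar relation obtained by substituting $x_n=\frac{\alpha_1}{A_1+y_{n-1}}$ into the second equation, namely $\frac{y_{n+1}}{y_n}=\frac{\gamma_2(A_1+y_{n-1})}{\alpha_1}$ for $n\geq 1$. Writing $\bar y=\frac{\alpha_1}{\gamma_2}-A_1$, this ratio exceeds, equals, or falls below $1$ according as $y_{n-1}$ exceeds, equals, or falls below $\bar y$. In Cases (1) and (2) we have $\bar y\leq 0$, so off the axis the ratio satisfies $\frac{\gamma_2(A_1+y_{n-1})}{\alpha_1}>\frac{\gamma_2 A_1}{\alpha_1}\geq 1$ and $y_n$ is strictly increasing; a short limit argument rules out a finite limit and forces $y_n\to\infty$, whence $x_{n+1}=\frac{\alpha_1}{A_1+y_n}\to 0$ and $(x_n,y_n)\to(0,\infty)$ for all data off the axis, while on the axis convergence to $\left(\frac{\alpha_1}{A_1},0\right)$ is immediate.

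For Case (3) I would produce the curve $C$ by invoking Section 3 at $\bar z=\left(\gamma_2,\frac{\alpha_1}{\gamma_2}-A_1\right)$. Assumption (2) of Theorem 1 holds on $\Delta$ because $\frac{\partial f}{\partial y}=-\frac{\alpha_1}{(A_1+y)^2}<0$ and $\frac{\partial g}{\partial x}=-\frac{\gamma_2 y}{x^2}<0$ whenever $y>0$, whereas assumption (1) fails since $\frac{\partial f}{\partial x}\equiv 0$ — precisely the non-strongly-competitive situation the generalization addresses. A computation gives $\mathrm{tr}\,J_T(\bar z)=1$ and $\det J_T(\bar z)=-\frac{\alpha_1-\gamma_2 A_1}{\alpha_1}<0$, so the eigenvalues obey $-1<\lambda_-<0$ and $\lambda_+>1$, and the $\lambda_-$-eigenvector has strictly positive slope (not a coordinate axis); Theorem 1 then yields the invariant, strictly increasing continuous curve $C$ lying in the basin of $\bar z$. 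Since $\det J_T=-\frac{\alpha_1\gamma_2 y}{(A_1+y)^2x^2}\neq 0$ on $\Delta$, Theorem 3 upgrades $C$ to a smooth curve. Finally, $\det J_T(\bar z)<0$, there are no fixed points in $\Delta$ (the two equilibria have $y=0$ or equal $\bar z$, hence lie outside $\Delta$), and the only preimage of $\bar z$ is $\bar z$ itself; so condition (iii) of Theorem 2 places the endpoints of $C$ on $\partial R$, making $C$ a separatrix that partitions $R$ as $W_-\cup C\cup W_+$.

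The last step converts Theorem 4 into the stated limits. Condition (2) of Theorem 4(B) holds near $\bar z$, so every orbit in $W_-$ eventually enters $\mathrm{int}(Q_2(\bar z))$ and every orbit in $W_+$ eventually enters $\mathrm{int}(Q_4(\bar z))$. On $\mathrm{int}(Q_4(\bar z))$ one has $y_n<\bar y$, so the recursion makes $y_n$ eventually strictly decreasing and bounded below; its limit $L$ would satisfy $\frac{\gamma_2(A_1+L)}{\alpha_1}=1$, i.e. $L=\bar y$, which is impossible for a sequence decreasing from below $\bar y$, forcing $L=0$ and $x_n\to\frac{\alpha_1}{A_1}$. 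Symmetrically, on $\mathrm{int}(Q_2(\bar z))$ the sequence $y_n$ is eventually increasing and unbounded, giving $(x_n,y_n)\to(0,\infty)$, while convergence to $\bar z$ along $C$ is guaranteed by Theorem 1; this also shows $C$ is the \emph{global} stable manifold. The step I expect to be the main obstacle is confirming that $C$ is a graph over all of $(0,\infty)$ with values in $(0,\infty)$. I would settle this by fixing $x_0>0$ and examining the vertical line: for small $y_0>0$ the orbit lands in the local basin of the asymptotically stable point $\left(\frac{\alpha_1}{A_1},0\right)$ and converges to it, which is possible only in $W_+$; for large $y_0$ the first iterate already lies in $\mathrm{int}(Q_2(\bar z))\subset W_-$, and since $W_+$ and $C$ are forward invariant the point $(x_0,y_0)$ itself must lie in $W_-$. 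As the line meets both sides of the separatrix, it crosses $C$, yielding a well-defined value $C(x_0)\in(0,\infty)$ for every $x_0$, with monotonicity and smoothness inherited from the earlier theorems.
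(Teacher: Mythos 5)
Your proposal is correct and follows essentially the same route as the paper: compute the equilibria and their linearizations, handle the cases $\gamma_{2}\geq\frac{\alpha_{1}}{A_{1}}$ by the monotonicity of $y_{n}$ coming from $x_{n}\leq\frac{\alpha_{1}}{A_{1}}$, and in the case $\gamma_{2}<\frac{\alpha_{1}}{A_{1}}$ verify hypothesis (2) of Theorem 1, condition (iii) of Theorem 2, and the hypotheses of Theorems 3 and 4 at $\bar z=\left(\gamma_{2},\frac{\alpha_{1}}{\gamma_{2}}-A_{1}\right)$ to obtain the separatrix $C$ and the dynamics on $W_{\pm}$. The only deviations are minor: in the endgame you replace the paper's explicit invariant boxes $(0,\gamma_{2}-\epsilon]\times[\frac{\alpha_{1}}{\gamma_{2}-\epsilon}-A_{1},\infty)$ and $[\gamma_{2}+\epsilon,\infty)\times[0,\frac{\alpha_{1}}{\gamma_{2}+\epsilon}-A_{1}]$ (which give geometric rates) by a monotone-limit argument identifying the only possible limits, and you add an explicit verification that $C$ is a graph over all of $(0,\infty)$, a point the paper leaves implicit.
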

\begin{proof}
To find the equilibria we solve:
$$\bar{x}=\frac{\alpha_{1}}{A_{1}+\bar{y}},\quad \bar{y}=\frac{\gamma_{2}\bar{y}}{\bar{x}}.$$
So, 
$$(\bar{x}_{1},\bar{y}_{1})=\left(\frac{\alpha_{1}}{A_{1}}, 0 \right)$$
is an equilibrium. Furthermore, if $\gamma_{2}\in (0,\frac{\alpha_{1}}{A_{1}}]$, then
$$(\bar{x}_{2},\bar{y}_{2})=\left(\gamma_{2}, \frac{\alpha_{1}}{\gamma_{2}}-A_{1}\right)$$
is another equilibrium. Now let us prove that there are no other equilibria. From the equation
$$\bar{y}=\frac{\gamma_{2}\bar{y}}{\bar{x}},$$
we see that for any equilibrium point $\bar{y}=0$ or $\bar{x}=\gamma_{2}$. If the former is true, then the equilibrium is $(\bar{x}_{1},\bar{y}_{1})$. If the latter is true, then the equilibrium is $(\bar{x}_{2},\bar{y}_{2})$.
If $\gamma_{2}=\frac{\alpha_{1}}{A_{1}}$, then the two equilibria coincide and in fact there is a single equilibrium point.
Performing linearized stability analysis about the equilibrium $(\bar{x},\bar{y})$, we solve:
$$det\left(\begin{array}{cc}
-\lambda & \frac{-\bar{x}}{A_{1}+\bar{y}}\\               
\frac{-\bar{y}}{\bar{x}} & \frac{\gamma_{2}}{\bar{x}}-\lambda\\ 
\end{array}
\right)= 0.$$
$$\lambda^{2}-\left(\frac{\gamma_{2}}{\bar{x}}\right)\lambda - \frac{\bar{y}}{A_{1}+\bar{y}}=0.$$
So, the roots of the characteristic equation of the linearized equation about the equilibrium $(\bar{x}_{1},\bar{y}_{1})$ are the roots of
$$\lambda^{2}-\left(\frac{\gamma_{2}A_{1}}{\alpha_{1}}\right)\lambda =0.$$
So, the roots of the characteristic equation of the linearized equation about the equilibrium $(\bar{x}_{1},\bar{y}_{1})$ are $\lambda_{1}=0$ and $\lambda_{2}=\frac{\gamma_{2}A_{1}}{\alpha_{1}}$.
Moreover, the roots of the characteristic equation of the linearized equation about the equilibrium $(\bar{x}_{2},\bar{y}_{2})$ are the roots of
$$\lambda^{2}-\lambda + \frac{\gamma_{2}A_{1}}{\alpha_{1}}-1=0.$$
So, 
$$\lambda_{1,2}=\frac{1\pm \sqrt{5-\frac{4\gamma_{2}A_{1}}{\alpha_{1}}}}{2}.$$
Thus, if $\gamma_{2}>\frac{\alpha_{1}}{A_{1}}$, then the unique equilibrium 
$$(\bar{x},\bar{y})=\left(\frac{\alpha_{1}}{A_{1}}, 0 \right)$$
is a saddle point. If $\gamma_{2}=\frac{\alpha_{1}}{A_{1}}$, then the unique equilibrium 
$$(\bar{x},\bar{y})=\left(\frac{\alpha_{1}}{A_{1}}, 0 \right)$$
is nonhyperbolic. If $\gamma_{2}<\frac{\alpha_{1}}{A_{1}}$, then there are two equilibria,
$$(\bar{x}_{1},\bar{y}_{1})=\left(\frac{\alpha_{1}}{A_{1}}, 0 \right)\quad and \quad (\bar{x}_{2},\bar{y}_{2})=\left(\gamma_{2}, \frac{\alpha_{1}}{\gamma_{2}}-A_{1}\right).$$
Furthermore, in this region of the parameters $\frac{\gamma_{2}A_{1}}{\alpha_{1}} < 1$ and 
$$\frac{1 + \sqrt{5-\frac{4\gamma_{2}A_{1}}{\alpha_{1}}}}{2} > \frac{1 + \sqrt{5-\frac{4\alpha_{1}A_{1}}{A_{1}\alpha_{1}}}}{2} = 1.$$
Moreover, in this region of the parameters 
$$\frac{1 - \sqrt{5-\frac{4\gamma_{2}A_{1}}{\alpha_{1}}}}{2} > \frac{1 - \sqrt{5}}{2} > -1.$$
So, if $\gamma_{2}<\frac{\alpha_{1}}{A_{1}}$, then $(\bar{x}_{1},\bar{y}_{1})$ is locally asymptotically stable and $(\bar{x}_{2},\bar{y}_{2})$ is a saddle point.
Consider the case where $\gamma_{2}>\frac{\alpha_{1}}{A_{1}}$. In this case, we have 
$$y_{n+2}=\frac{\gamma_{2}y_{n+1}}{x_{n+1}}\geq \frac{\gamma_{2}A_{1}y_{n+1}}{\alpha_{1}},\quad n\geq 0.$$
So, in this case, whenever $(x_{0},y_{0})\not\in (0,\infty)\times \{0\}$, then $\lim_{n\rightarrow \infty}(x_{n},y_{n})=(0,\infty)$. If $\gamma_{2}>\frac{\alpha_{1}}{A_{1}}$ and $(x_{0},y_{0})\in (0,\infty)\times \{0\}$, then $(x_{n},y_{n})=(\frac{\alpha_{1}}{A_{1}},0)$ for $n\geq 1$. Thus, in this case, $(0,\infty)\times \{0\}$ is the stable manifold for the saddle point equilibrium $(\frac{\alpha_{1}}{A_{1}},0)$.
Consider the case where $\gamma_{2}=\frac{\alpha_{1}}{A_{1}}$. In this case, we have 
$$y_{n+2}=\frac{\gamma_{2}y_{n+1}}{x_{n+1}}= y_{n+1}+\frac{\gamma_{2}y_{n+1}y_{n}}{\alpha_{1}},\quad n\geq 0.$$
So, the solution $\{(x_{n},y_{n})\}^{\infty}_{n=0}$ has $\{y_{n}\}^{\infty}_{n=1}$ monotone increasing. Furthermore,
$$y_{n+2}=\frac{\gamma_{2}y_{n+1}}{x_{n+1}}= y_{n+1}+\frac{\gamma_{2}y_{n+1}y_{n}}{\alpha_{1}}\geq y_{n+1}+\frac{\gamma_{2}\min(y_{0},y_{1})^{2}}{\alpha_{1}},\quad n\geq 0.$$
So, in this case, whenever $(x_{0},y_{0})\not\in (0,\infty)\times \{0\}$, then $\lim_{n\rightarrow \infty}(x_{n},y_{n})=(0,\infty)$. If $\gamma_{2}=\frac{\alpha_{1}}{A_{1}}$ and $(x_{0},y_{0})\in (0,\infty)\times \{0\}$, then $(x_{n},y_{n})=(\frac{\alpha_{1}}{A_{1}},0)$ for $n\geq 1$. Thus, in this case, $(0,\infty)\times \{0\}$ is an invariant manifold which is the basin of attraction for the nonhyperbolic equilibrium $(\frac{\alpha_{1}}{A_{1}},0)$.
Now, consider the case $\gamma_{2}<\frac{\alpha_{1}}{A_{1}}$. For this case we want to apply Theorems 1, 2, 3, and 4, so we check the hypotheses of these theorems.
Consider the region $R=(0, \infty)^{2}$. Let us consider the competitive map 
$$T\left(\begin{array}{cc}
          x\\
y\\
         \end{array}
\right)= \left(\begin{array}{cc}
f(x,y)\\
g(x,y)\\
         \end{array}
\right)= \left(\begin{array}{cc}
\frac{\alpha_{1}}{A_{1}+y}\\
\frac{\gamma_{2}y}{x}\\
         \end{array}
\right).$$
Call the fixed point $\left(\gamma_{2}, \frac{\alpha_{1}}{\gamma_{2}}-A_{1}\right)$, $\bar{z}$. This fixed point will act as the fixed point, $\bar{z}$, in the statement of Theorem 1.
Notice that $\Delta := R\cap int\left(Q_{1}(\bar{z})\cup Q_{3}(\bar{z})\right)$ is nonempty. Also, notice that
$$\frac{\partial f}{\partial y}|_{(s,t)}= \frac{-\alpha_{1}}{(A_{1}+t)^{2}}<0\;\;for\;\;all\;\; (s,t)\in\Delta.$$ 
$$\frac{\partial g}{\partial x}|_{(s,t)}=\frac{-\gamma_{2}t}{s^{2}}<0\;\;for\;\;all\;\; (s,t)\in\Delta.$$
Moreover, $T$ is $C^{\infty}$ on all of $R$. Also, $J_{T}(\bar{z})$ has eigenvalues,
$$\mu= \frac{1 + \sqrt{5-\frac{4\gamma_{2}A_{1}}{\alpha_{1}}}}{2},$$
$$\lambda= \frac{1 - \sqrt{5-\frac{4\gamma_{2}A_{1}}{\alpha_{1}}}}{2}.$$
So, as we have shown earlier,
$$-1< \frac{1 - \sqrt{5-\frac{4\gamma_{2}A_{1}}{\alpha_{1}}}}{2} < 0 < 1 < \frac{1 + \sqrt{5-\frac{4\gamma_{2}A_{1}}{\alpha_{1}}}}{2}.$$
Further, notice that
$$J_{T}(\bar{z})=\left(\begin{array}{cc}
0 & \frac{-\gamma^{2}_{2}}{\alpha_{1}}\\               
\frac{A_{1}}{\gamma_{2}}-\frac{\alpha_{1}}{\gamma^{2}_{2}} & 1\\ 
\end{array}
\right).$$
Thus, the eigenspace $E^{\lambda}$ is not a coordinate axis. Moreover, the value of $J_{T}(\bar{z})$ and the fact that $T$ is $C^{\infty}$ on all of $R$ tells us that the hypothesis (2) needed for Theorem 4 holds.\par
Notice that since $\gamma_{2}<\frac{\alpha_{1}}{A_{1}}$, the equilibrium $(\bar{x}_{1},\bar{y}_{1})\not\in\Delta$. Also notice that 
$$det(J_{T}(\bar{z}))=det\left(\begin{array}{cc}
0 & \frac{-\gamma^{2}_{2}}{\alpha_{1}}\\               
\frac{A_{1}}{\gamma_{2}}-\frac{\alpha_{1}}{\gamma^{2}_{2}} & 1\\ 
\end{array}
\right)= - 1 + \frac{\gamma_{2}A_{1}}{\alpha_{1}}<0 .$$
Furthermore, the following system of equations has a unique solution,
$$\gamma_{2}=\frac{\alpha_{1}}{A_{1}+y},$$
$$A_{1}-\frac{\alpha_{1}}{\gamma_{2}}=\frac{\gamma_{2}y}{x}.$$
Thus, Theorems 1, 2, 3, and 4 apply on the region $R=(0, \infty)^{2}$. Therefore, the consequences of Theorems 1, 2, 3, and 4 are true on the region $(0, \infty)^{2}$.
Thus, there is a strictly increasing $C^{\infty}$ function on $(0,\infty)$, $C(x)$, whose graph passes through the point $\left(\gamma_{2}, \frac{\alpha_{1}}{\gamma_{2}}-A_{1}\right)$. The graph of $C(x)$ separates $(0,\infty)^{2}$ into two invariant regions,
$$W_{-}:= \{(x,y)\in (0,\infty)^{2}| y>C(x)\}.$$
$$W_{+}:=\{(x,y)\in (0,\infty)^{2}| y<C(x)\}.$$
Theorem 4(B) then tells us that for each point $(x_{0},y_{0})\in W_{-}$, there exists an $n_{0}\in\mathbb{N}$ such that $x_{n}< \gamma_{2}$ and $y_{n}> \frac{\alpha_{1}}{\gamma_{2}}-A_{1}$ for $n\geq n_{0}$.
We claim that for sufficiently small $\epsilon >0$ sets of the form $(0,\gamma_{2}-\epsilon]\times [\frac{\alpha_{1}}{\gamma_{2}-\epsilon}-A_{1}, \infty)$ are invariant. Let us prove this claim.
Suppose $(x_{n},y_{n})\in (0,\gamma_{2}-\epsilon]\times [\frac{\alpha_{1}}{\gamma_{2}-\epsilon}-A_{1}, \infty)$, then 
$$y_{n+1}=\frac{\gamma_{2}y_{n}}{x_{n}}\geq y_{n} \geq \frac{\alpha_{1}}{\gamma_{2}-\epsilon}-A_{1}.$$
Moreover,
$$x_{n+1}=\frac{\alpha_{1}}{A_{1}+y_{n}}\leq \frac{\alpha_{1}}{A_{1}+\frac{\alpha_{1}}{\gamma_{2}-\epsilon}-A_{1}}= \gamma_{2}-\epsilon.$$
So, for some $\epsilon>0$, $y_{n+1} > \frac{\gamma_{2}}{\gamma_{2}-\epsilon }y_{n} $ for $n> n_{0}$.
Thus, by this fact and our original recursive equation, for each initial condition $(x_{0},y_{0})\in W_{-}$, $\lim_{n\rightarrow \infty}(x_{n},y_{n})=(0,\infty)$. \par
Further, Theorem 4(B) tells us that for each point $(x_{0},y_{0})\in W_{+}$, there exists an $n_{0}\in\mathbb{N}$ such that $x_{n}> \gamma_{2}$ and $y_{n}< \frac{\alpha_{1}}{\gamma_{2}}-A_{1}$ for $n\geq n_{0}$.
We claim that for sufficiently small $\epsilon >0$ sets of the form $[\gamma_{2}+\epsilon,\infty)\times [0,\frac{\alpha_{1}}{\gamma_{2}+\epsilon}-A_{1}]$ are invariant. Let us prove this claim.
Suppose $(x_{n},y_{n})\in [\gamma_{2}+\epsilon,\infty)\times [0,\frac{\alpha_{1}}{\gamma_{2}+\epsilon}-A_{1}]$, then 
$$y_{n+1}=\frac{\gamma_{2}y_{n}}{x_{n}}\leq y_{n} \leq \frac{\alpha_{1}}{\gamma_{2}+\epsilon}-A_{1}.$$
Moreover,
$$x_{n+1}=\frac{\alpha_{1}}{A_{1}+y_{n}}\geq \frac{\alpha_{1}}{A_{1}+\frac{\alpha_{1}}{\gamma_{2}+\epsilon}-A_{1}}= \gamma_{2}+\epsilon.$$
So, for some $\epsilon>0$, $y_{n+1} < \frac{\gamma_{2}}{\gamma_{2}+\epsilon }y_{n} $ for $n> n_{0}$.
Thus, by this fact and our original recursive equation, for each initial condition $(x_{0},y_{0})\in W_{+}$, $\lim_{n\rightarrow \infty}(x_{n},y_{n})=\left(\frac{\alpha_{1}}{A_{1}}, 0 \right)$. 
Moreover, by Theorems 1, 2, 3, and 4, the set 
$$\{(x,y)\in (0,\infty)^{2}| y=C(x)\}$$
is an invariant separatrix. Also, for $(x_{0},y_{0})\in \{(x,y)\in (0,\infty)^{2}| y=C(x)\}$, $\lim_{n\rightarrow \infty}(x_{n},y_{n})=\left(\gamma_{2}, \frac{\alpha_{1}}{\gamma_{2}}-A_{1}\right)$.
Suppose $(x_{0},y_{0})\in (0,\infty )\times \{0\}$, then $(x_{n},y_{n})= \left(\frac{\alpha_{1}}{A_{1}}, 0 \right)$ for $n\geq 1$. Suppose $(x_{0},y_{0})\in  \{0\}\times [0,\infty )$, then $y_{1}$ is undefined.

\end{proof}

\section{The System (11,14)}
\begin{thm}
Consider the following system of rational difference equations
$$x_{n+1}=\frac{\alpha_{1}}{A_{1}+y_{n}},\quad y_{n+1}=\frac{y_{n}}{A_{2}+x_{n}},\quad n=0,1,\dots,$$
with $\alpha_1,A_{1},A_{2}>0$ and arbitrary nonnegative initial conditions so that the denominator is never zero.
For this system of rational difference equations there are 4 regions in parametric space with distinct global behavior. The behavior is as follows:
\begin{enumerate}
\item If $A_{2}\geq 1$, then the unique equilibrium $\left(\frac{\alpha_{1}}{A_{1}},0\right)$ is globally asymptotically stable.
\item If $A_{2}+\frac{\alpha_{1}}{A_{1}} < 1$, then the unique equilibrium $\left(\frac{\alpha_{1}}{A_{1}},0\right)$ is a saddle point with stable manifold $[0,\infty)\times \{0\}$. Whenever $(x_{0},y_{0})\not\in [0,\infty)\times \{0\}$, then $\lim_{n\rightarrow \infty}(x_{n},y_{n})=(0,\infty)$. 
\item If $A_{2}+\frac{\alpha_{1}}{A_{1}} = 1$, then the unique equilibrium $\left(\frac{\alpha_{1}}{A_{1}},0\right)$ is nonhyperbolic with basin of attraction $[0,\infty)\times \{0\}$. Whenever $(x_{0},y_{0})\not\in [0,\infty)\times \{0\}$, then $\lim_{n\rightarrow \infty}(x_{n},y_{n})=(0,\infty)$. 
\item If $A_{2}+\frac{\alpha_{1}}{A_{1}} > 1$, yet $A_{2} < 1$, then there are exactly two equilibria $\left(\frac{\alpha_{1}}{A_{1}},0\right)$ and $\left(1-A_{2},\frac{\alpha_{1}}{1-A_{2}}-A_{1}\right)$. The equilibrium $\left(\frac{\alpha_{1}}{A_{1}},0\right)$ is locally asymptotically stable, and the equilibrium $\left(1-A_{2},\frac{\alpha_{1}}{1-A_{2}}-A_{1}\right)$ is a saddle point.
Moreover, there is a strictly increasing smooth function $C:(0,\infty)\rightarrow (0,\infty)$, which has the property that its graph $\{(x,y)\in (0,\infty)^{2}|y=C(x)\}$ is the stable manifold for the saddle point equilibrium $\left(1-A_{2},\frac{\alpha_{1}}{1-A_{2}}-A_{1}\right)$ and an invariant separatrix for all positive solutions. Whenever $x_{0}>0$, the behavior of solutions can be described as follows. If $y_{0}>C(x_{0})$, then $\lim_{n\rightarrow \infty}(x_{n},y_{n})=(0,\infty)$. If $y_{0}=C(x_{0})$, then $\lim_{n\rightarrow \infty}(x_{n},y_{n})=\left(1-A_{2},\frac{\alpha_{1}}{1-A_{2}}-A_{1}\right)$. If $y_{0} < C(x_{0})$, then $\lim_{n\rightarrow \infty}(x_{n},y_{n})= \left(\frac{\alpha_{1}}{A_{1}},0\right)$.
Whenever $x_{0}=0$, we substitute $(x_{1},y_{1})$ for $(x_{0},y_{0})$ in the prior statements to determine the global behavior. 
\end{enumerate}

\end{thm}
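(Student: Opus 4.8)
The plan is to follow the template of the preceding theorem for System $(11,6)$, adapting it to the shifted denominator $A_2+x_n$. First I would locate the equilibria by solving $\bar{x}=\alpha_1/(A_1+\bar{y})$ and $\bar{y}=\bar{y}/(A_2+\bar{x})$; the second equation forces $\bar{y}=0$ or $\bar{x}=1-A_2$, yielding $(\alpha_1/A_1,0)$ always and the interior point $\left(1-A_2,\tfrac{\alpha_1}{1-A_2}-A_1\right)$ precisely when $A_2<1$ and $A_2+\alpha_1/A_1>1$ (so both coordinates are positive). Linearizing, the Jacobian at $(\alpha_1/A_1,0)$ is upper triangular with eigenvalues $0$ and $1/(A_2+\alpha_1/A_1)$, so the sign of $A_2+\alpha_1/A_1-1$ cleanly separates Cases 1--3. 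At the interior fixed point the characteristic polynomial is $\lambda^2-\lambda+\det J=0$ with $\det J=-(1-A_2)^2\bar{y}/\alpha_1<0$; the negative determinant makes the eigenvalues real of opposite sign, and I would verify $\det J>-(1-A_2)>-1>-2$, which is exactly what is needed to force $|\lambda_-|<1<\lambda_+$, i.e. a saddle.

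For Cases 1, 2, 3 I would argue directly from $y_{n+1}=y_n/(A_2+x_n)$, using $x_n=\alpha_1/(A_1+y_{n-1})\le\alpha_1/A_1$ for $n\ge1$, so that $A_2+x_n\le A_2+\alpha_1/A_1$. In Case 1 ($A_2\ge1$) we have $A_2+x_n>1$ with a uniform lower bound once $y_n$ is bounded, forcing $y_n\to0$ and $x_n\to\alpha_1/A_1$, hence global asymptotic stability. In Case 2 ($A_2+\alpha_1/A_1<1$) we get $A_2+x_n<1$, so $y_n$ grows geometrically to $\infty$ and $x_n\to0$ whenever $y_0>0$, while $[0,\infty)\times\{0\}$ is the stable manifold. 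Case 3 ($A_2+\alpha_1/A_1=1$) is borderline: here $y_n$ is nondecreasing for $y_0>0$, and I would rule out a finite limit $L$ by observing that $L<\infty$ forces $A_2+\alpha_1/(A_1+L)=1$, hence $L=0$, contradicting $y_1>0$; thus $y_n\to\infty$ and $x_n\to0$. The boundary subcase $x_0=0$ is absorbed by passing to $(x_1,y_1)$, which lies in the interior once $y_0>0$, since $A_1+y$ and $A_2+x$ never vanish.

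For Case 4 the strategy is to verify the hypotheses of Theorems 1--4 on $R=(0,\infty)^2$ with $\bar{z}=\left(1-A_2,\tfrac{\alpha_1}{1-A_2}-A_1\right)$ and then invoke them. The map is competitive and $C^{\infty}$, the set $\Delta$ is nonempty, and condition (2) of Theorem 1 holds because $\partial f/\partial y=-\alpha_1/(A_1+y)^2<0$ and $\partial g/\partial x=-y/(A_2+x)^2<0$ on $\Delta$, while the Jacobian at $\bar{z}$ has the required real eigenvalues with $E^{\lambda}$ off the axes. For Theorem 2 I would invoke condition (iii): there are no fixed points in $\Delta$, $\det J_T(\bar{z})<0$, and $T(x)=\bar{z}$ has only the solution $\bar{z}$ (the first component forces $y=\bar{y}$, then the second forces $x=\bar{x}$), so the endpoints of the separatrix $C$ lie on $\partial R$. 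The Jacobian is invertible throughout $\Delta$, its determinant $-\alpha_1 y/[(A_1+y)^2(A_2+x)^2]$ being nonzero, which gives the $C^{\infty}$ smoothness of Theorem 3; and condition (2) of Theorem 4(B) holds since in addition $\partial g/\partial y=1/(A_2+x)>0$. This delivers $C$ as the claimed increasing smooth separatrix, a subset of the basin of $\bar{z}$, splitting $R$ into $W_-$ (above $C$) and $W_+$ (below $C$).

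It remains to upgrade the local conclusions of Theorem 4(B) to the global limits, and this is where the real work lies. Theorem 4(B) supplies $n_0$ with $x_n<1-A_2$, $y_n>\bar{y}$ for $x\in W_-$, and $x_n>1-A_2$, $y_n<\bar{y}$ for $x\in W_+$, when $n\ge n_0$. On $W_-$, $x_n<1-A_2$ makes $A_2+x_n<1$, so $y_n$ is eventually strictly increasing to some $L\in(\bar{y},\infty]$; a finite $L$ would force $\alpha_1/(A_1+L)=1-A_2$, i.e. $L=\bar{y}$, a contradiction, so $y_n\to\infty$ and $x_n\to0$, giving $(x_n,y_n)\to(0,\infty)$. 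Symmetrically, on $W_+$, $A_2+x_n>1$ makes $y_n$ eventually strictly decreasing to some $L\in[0,\bar{y})$; ruling out $L>0$ the same way yields $L=0$ and $x_n\to\alpha_1/A_1$. Points on $C$ converge to $\bar{z}$ by Theorem 1. The main obstacle is precisely this bootstrapping: Theorem 4(B) only confines the orbit tail to a quadrant, and closing the argument requires the monotonicity of $\{y_n\}$ together with the absence of any further equilibrium in the relevant quadrant to exclude a spurious interior limit --- exactly the role played by the computation that $L$ must equal $\bar{y}$.
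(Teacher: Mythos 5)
Your proposal is correct and follows the same overall architecture as the paper's proof: locate the two equilibria, linearize (your saddle criterion $\det J_T(\bar z)>-(1-A_2)>-2$ is a correct, marginally slicker substitute for the paper's computation that $A_2+A_1(1-A_2)^2/\alpha_1<1$), handle cases (1)--(3) by direct monotonicity of $\{y_n\}$ using $x_n\le\alpha_1/A_1$, and in case (4) verify the hypotheses of Theorems 1--4 on $(0,\infty)^2$ --- condition (2) of Theorem 1, condition (iii) of Theorem 2, invertibility of the Jacobian for Theorem 3, and condition (2) of Theorem 4(B) --- exactly as the paper does. The one place you genuinely diverge is the final bootstrapping step, which you rightly identify as where the real work lies. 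The paper closes the argument by exhibiting, for small $\epsilon>0$, the forward-invariant rectangles $[0,1-A_2-\epsilon]\times[\alpha_1/(1-A_2-\epsilon)-A_1,\infty)$ and $[1-A_2+\epsilon,\infty)\times[0,\alpha_1/(1-A_2+\epsilon)-A_1]$, inside which $y_{n+1}>\frac{1}{1-\epsilon}\,y_n$ (respectively $y_{n+1}<\frac{1}{1+\epsilon}\,y_n$), so that divergence to $\infty$ (respectively convergence to $0$) occurs at a geometric rate. You instead observe that once Theorem 4(B) confines the orbit tail to $x_n<1-A_2$ (respectively $x_n>1-A_2$), the sequence $\{y_n\}$ is eventually strictly monotone, and a finite positive limit $L$ would force $A_2+\alpha_1/(A_1+L)=1$, i.e.\ $L=\bar y$, contradicting $L>\bar y$ (respectively $L<\bar y$). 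Both arguments are valid; yours reuses the same equilibrium-exclusion device the paper itself employs in the borderline case $A_2+\alpha_1/A_1=1$ and is slightly more economical, while the paper's invariant-rectangle argument carries the small bonus of an explicit geometric rate. The remaining differences (your geometric-rate argument in case (1) versus the paper's monotone-convergence-to-an-equilibrium argument, and the one-line treatment of the boundary rays $y_0=0$ and $x_0=0$) are cosmetic.
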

\begin{proof}
To find the equilibria we solve:
$$\bar{x}=\frac{\alpha_{1}}{A_{1}+\bar{y}},\quad \bar{y}=\frac{\bar{y}}{A_{2}+\bar{x}}.$$
So, $(\bar{x}_{1},\bar{y}_{1})=(\frac{\alpha_{1}}{A_{1}},0)$ and if $A_{2}<1$, $\alpha_{1}> A_{1}-A_{1}A_{2}$, and $\bar{y}\neq 0$, then 
$$(\bar{x}_{2},\bar{y}_{2})=(1-A_{2},\frac{\alpha_{1}}{1-A_{2}}-A_{1}).$$
Performing linearized stability analysis about the equilibrium $(\bar{x},\bar{y})$ we solve:
$$det\left(\begin{array}{cc}
-\lambda & \frac{-\bar{x}}{A_{1}+\bar{y}}\\               
\frac{-\bar{y}}{A_{2}+\bar{x}} & \frac{1}{A_{2}+\bar{x}}-\lambda\\ 
\end{array}
\right)= 0.$$
$$\lambda^{2}-\left(\frac{1}{A_{2}+\bar{x}}\right)\lambda + \frac{-\bar{x}\bar{y}}{\left(A_{1}+\bar{y}\right)\left(A_{2}+\bar{x}\right)}=0.$$
Thus, for $(\bar{x}_{1},\bar{y}_{1})=(\frac{\alpha_{1}}{A_{1}},0)$, we get
$$\lambda^{2}-\left(\frac{1}{A_{2}+\frac{\alpha_{1}}{A_{1}}}\right)\lambda =0.$$
So, $\lambda_{11}=0$ and $\lambda_{12}=\frac{1}{A_{2}+\frac{\alpha_{1}}{A_{1}}}$.
For $(\bar{x}_{2},\bar{y}_{2})=(1-A_{2},\frac{\alpha_{1}}{1-A_{2}}-A_{1})$, we get
$$\lambda^{2}-\lambda - \frac{(\alpha_{1}-A_{1}+A_{1}A_{2})}{\frac{\alpha_{1}}{1-A_{2}}}=0,$$
$$\lambda^{2}-\lambda +A_{2}-1 +\frac{A_{1}(1-A_{2})^{2}}{\alpha_{1}}=0.$$
So,
$$\lambda_{21,22}=\frac{1\pm\sqrt{5-4A_{2}-4\frac{A_{1}(1-A_{2})^{2}}{\alpha_{1}}}}{2}.$$
Thus, if $A_{2}+\frac{\alpha_{1}}{A_{1}}<1$, then $(\frac{\alpha_{1}}{A_{1}},0)$ is the unique nonnegative equilibrium and it is a saddle point.
If $A_{2}+\frac{\alpha_{1}}{A_{1}}=1$, then $(\frac{\alpha_{1}}{A_{1}},0)$ is the unique nonnegative equilibrium and it is nonhyperbolic.
If $A_{2}+\frac{\alpha_{1}}{A_{1}}>1$ and $A_{2}<1$, then  $A_{2}+\frac{A_{1}(1-A_{2})^{2}}{\alpha_{1}}< A_{2}+\frac{A_{1}(1-A_{2})\left(\frac{\alpha_{1}}{A_{1}}\right)}{\alpha_{1}}= 1$, so the equilibrium $(\frac{\alpha_{1}}{A_{1}},0)$ is locally asymptotically stable and the equilibrium $(1-A_{2},\frac{\alpha_{1}}{1-A_{2}}-A_{1})$ is a saddle point.
If $A_{2}\geq 1$, then $(\frac{\alpha_{1}}{A_{1}},0)$ is the unique nonnegative equilibrium and it is locally asymptotically stable.\par
Suppose $A_{2}+\frac{\alpha_{1}}{A_{1}}<1$ and $(x_{0},y_{0})\not\in [0,\infty)\times \{0\}$, then $x_{1}< \frac{\alpha_{1}}{A_{1}}$, so $y_{n}> \frac{y_{n-1}}{A_{2}+ \frac{\alpha_{1}}{A_{1}}} $ for $n\geq 2$. Thus, since $y_{0}\neq 0$ and $A_{2}+\frac{\alpha_{1}}{A_{1}}<1$, $\lim_{n\rightarrow\infty} (x_{n},y_{n})=(0,\infty)$. 
Thus, $[0,\infty)\times (0,\infty)$ is a basin of attraction for $(0,\infty)$ in this case. 
Suppose $A_{2}+\frac{\alpha_{1}}{A_{1}}<1$ and $(x_{0},y_{0})\in [0,\infty)\times \{0\}$, then $(x_{n},y_{n})= (\frac{\alpha_{1}}{A_{1}},0)$ for $n\geq 1$. Thus, $[0,\infty)\times \{0\}$ is the stable manifold for the saddle equilibrium $(\frac{\alpha_{1}}{A_{1}},0)$ in this case.\par
Suppose $A_{2}+\frac{\alpha_{1}}{A_{1}}=1$ and $(x_{0},y_{0})\not\in [0,\infty)\times \{0\}$, then $x_{1}< \frac{\alpha_{1}}{A_{1}}$, so 
$$y_{n}> \frac{y_{n-1}}{A_{2}+ \frac{\alpha_{1}}{A_{1}}} = y_{n-1} \;\; for\;\; n\geq 2.$$
Thus, $y_{n}$ is an eventually monotone increasing sequence of real numbers, and so either converges to a finite limit or diverges to $\infty$.
 If $y_{n}$ converges to a finite limit, then $y_{n}$ must converge to the second coordinate of an equilibrium point. This follows from the continuity of our map. 
However, the only equilibrium in this case is the nonhyperbolic equilibrium $(\frac{\alpha_{1}}{A_{1}},0)$, with second coordinate $0$. Therefore, since $\{y_{n}\}^{\infty}_{n=2}$ is a positive monotone increasing sequence, it cannot converge to $0$. Thus, $y_{n}$ diverges monotonically to $\infty$.
So, if $A_{2}+\frac{\alpha_{1}}{A_{1}}=1$ and $(x_{0},y_{0})\not\in [0,\infty)\times \{0\}$, then $\lim_{n\rightarrow\infty} (x_{n},y_{n})=(0,\infty)$. 
Thus, $[0,\infty)\times (0,\infty)$ is a basin of attraction for $(0,\infty)$ in this case. 
Suppose $A_{2}+\frac{\alpha_{1}}{A_{1}}=1$ and $(x_{0},y_{0})\in [0,\infty)\times \{0\}$, then $(x_{n},y_{n})= (\frac{\alpha_{1}}{A_{1}},0)$ for $n\geq 1$. Thus, $[0,\infty)\times \{0\}$ is an invariant manifold which is the basin of attraction for the nonhyperbolic equilibrium $(\frac{\alpha_{1}}{A_{1}},0)$ in this case.\par
Suppose $A_{2}\geq 1$, then 
$$y_{n}\leq y_{n-1} \;\; for\;\; n\geq 1.$$
Hence, $y_{n}$ is a monotone decreasing sequence of real numbers which is bounded below by zero. Thus, $y_{n}$ converges to a finite limit and $y_{n}$ must converge to the second coordinate of an equilibrium point. This follows from the continuity of our map.
Thus, $y_{n}$ converges monotonically to zero. So, in this case, $(\frac{\alpha_{1}}{A_{1}},0)$ is a locally asymptotically stable global attractor and so globally asymptotically stable.\par
Now, consider the case $A_{2}+\frac{\alpha_{1}}{A_{1}}>1$ and $A_{2}<1$. For this case we want to apply Theorems 1, 2, 3, and 4, so we check the hypotheses of these theorems. 
Consider the region $R=(0 ,\infty)^{2}$. Let us consider the competitive map
$$T\left(\begin{array}{cc}
          x\\
y\\
         \end{array}
\right)= \left(\begin{array}{cc}
f(x,y)\\
g(x,y)\\
         \end{array}
\right)= \left(\begin{array}{cc}
\frac{\alpha_{1}}{A_{1}+y}\\
\frac{y}{A_{2}+x}\\
         \end{array}
\right).$$
Call the fixed point $\left(1-A_{2},\frac{\alpha_{1}}{1-A_{2}}-A_{1}\right)$, $\bar{z}$. This fixed point will act as the fixed point, $\bar{z}$, in the statement of Theorem 1.
Notice that $\Delta := R\cap int(Q_{1}(\bar{z})\cup Q_{3}(\bar{z}))$ is nonempty. Also, notice that
$$\frac{\partial f}{\partial y}|_{(s,t)}= \frac{-\alpha_{1}}{(A_{1}+t)^{2}}<0\;\;for\;\;all\;\; (s,t)\in\Delta.$$ 
$$\frac{\partial g}{\partial x}|_{(s,t)}=\frac{-t}{(A_{2}+s)^{2}}<0\;\;for\;\;all\;\; (s,t)\in\Delta.$$
Moreover, $T$ is $C^{\infty}$ on all of $R$. Also, $J_{T}(\bar{z})$ has eigenvalues
$$\mu= \frac{1+\sqrt{5-4A_{2}-4\frac{A_{1}(1-A_{2})^{2}}{\alpha_{1}}}}{2},$$
$$\lambda= \frac{1-\sqrt{5-4A_{2}-4\frac{A_{1}(1-A_{2})^{2}}{\alpha_{1}}}}{2}.$$
So, from the inequality $A_{2}+\frac{A_{1}(1-A_{2})^{2}}{\alpha_{1}}< 1$ which holds in this region of the parameters, we get
$$-1< \frac{1-\sqrt{5-4A_{2}-4\frac{A_{1}(1-A_{2})^{2}}{\alpha_{1}}}}{2} < 0 < 1 <\frac{1+\sqrt{5-4A_{2}-4\frac{A_{1}(1-A_{2})^{2}}{\alpha_{1}}}}{2}.$$
Further, notice that
$$J_{T}(\bar{z})=\left(\begin{array}{cc}
0 & \frac{-(1-A_{2})^{2}}{\alpha_{1}}\\               
A_{1}-\frac{\alpha_{1}}{1-A_{2}} & 1\\ 
\end{array}
\right).$$
Thus, since $A_{2}+\frac{\alpha_{1}}{A_{1}}>1$ and $A_{2}<1$, $A_{1}-\frac{\alpha_{1}}{1-A_{2}}<0$. So, the eigenspace $E^{\lambda}$ is not a coordinate axis. Moreover, the value of $J_{T}(\bar{z})$ and the fact that $T$ is $C^{\infty}$ on all of $R$ tells us that the hypothesis (2) needed for Theorem 4 holds.\par
Notice that since $A_{2}+\frac{\alpha_{1}}{A_{1}}>1$, $1-A_{2}< \frac{\alpha_{1}}{A_{1}}$ and so, the equilibrium $(\bar{x}_{1},\bar{y}_{1})\not\in\Delta$. Also notice that
$$det(J_{T}(\bar{z}))=det\left(\begin{array}{cc}
0 & \frac{-(1-A_{2})^{2}}{\alpha_{1}}\\               
A_{1}-\frac{\alpha_{1}}{1-A_{2}} & 1\\ 
\end{array}
\right)= A_{2} - 1 + \frac{(1-A_{2})^{2}A_{1}}{\alpha_{1}}<0 ,$$
from the inequality $A_{2}+\frac{A_{1}(1-A_{2})^{2}}{\alpha_{1}}< 1$ which holds in this region of the parameters. 
Furthermore, the following system of equations has a unique solution,
$$1-A_{2}=\frac{\alpha_{1}}{A_{1}+y},$$
$$\frac{\alpha_{1}}{1-A_{2}}-A_{1}=\frac{y}{A_{2}+x}.$$
Thus, Theorems 1, 2, 3, and 4 apply on the region $R=(0, \infty)^{2}$. Therefore, the consequences of Theorems 1, 2, 3, and 4 are true on the region $(0, \infty)^{2}$.
Thus, there is a strictly increasing $C^{\infty}$ function on $(0,\infty)$, $C(x)$, whose graph passes through the point $\left(1-A_{2},\frac{\alpha_{1}}{1-A_{2}}-A_{1}\right)$. The graph of $C(x)$ separates $(0,\infty)^{2}$ into two invariant regions,
$$W_{-}:= \{(x,y)\in (0,\infty)^{2}| y>C(x)\}.$$
$$W_{+}:=\{(x,y)\in (0,\infty)^{2}| y<C(x)\}.$$
Theorem 4(B) then tells us that for each point $(x_{0},y_{0})\in W_{-}$, there exists an $n_{0}\in\mathbb{N}$ such that $x_{n}< 1-A_{2}$ and $y_{n}> \frac{\alpha_{1}}{1-A_{2}}-A_{1}$ for $n\geq n_{0}$.
We claim that for sufficiently small $\epsilon >0$ sets of the form $[0,1-A_{2}-\epsilon]\times [\frac{\alpha_{1}}{1-A_{2}-\epsilon}-A_{1}, \infty)$ are invariant. Let us prove this claim.
Suppose $(x_{n},y_{n})\in [0,1-A_{2}-\epsilon]\times [\frac{\alpha_{1}}{1-A_{2}-\epsilon}-A_{1}, \infty)$, then 
$$y_{n+1}=\frac{y_{n}}{A_{2}+x_{n}}\geq y_{n} \geq \frac{\alpha_{1}}{1-A_{2}-\epsilon}-A_{1}.$$
Moreover,
$$x_{n+1}=\frac{\alpha_{1}}{A_{1}+y_{n}}\leq \frac{\alpha_{1}}{A_{1}+\frac{\alpha_{1}}{1-A_{2}-\epsilon}-A_{1}}= 1-A_{2}-\epsilon.$$
So, for some $\epsilon>0$, $y_{n+1} > \frac{1}{A_{2}+1-A_{2}-\epsilon }y_{n} $ for $n> n_{0}$.
Thus, by this fact and our original recursive equation, for each initial condition $(x_{0},y_{0})\in W_{-}$, $\lim_{n\rightarrow \infty}(x_{n},y_{n})=(0,\infty)$. \par
Further, Theorem 4(B) tells us that for each point $(x_{0},y_{0})\in W_{+}$, there exists an $n_{0}\in\mathbb{N}$ such that $x_{n}> 1-A_{2}$ and $y_{n}< \frac{\alpha_{1}}{1-A_{2}}-A_{1}$ for $n\geq n_{0}$.
We claim that for sufficiently small $\epsilon >0$ sets of the form $[1-A_{2}+\epsilon,\infty)\times [0,\frac{\alpha_{1}}{1-A_{2}+\epsilon}-A_{1}]$ are invariant. Let us prove this claim.
Suppose $(x_{n},y_{n})\in [1-A_{2}+\epsilon,\infty)\times [0,\frac{\alpha_{1}}{1-A_{2}+\epsilon}-A_{1}]$, then 
$$y_{n+1}=\frac{y_{n}}{A_{2}+x_{n}}\leq y_{n} \leq \frac{\alpha_{1}}{1-A_{2}+\epsilon}-A_{1}.$$
Moreover,
$$x_{n+1}=\frac{\alpha_{1}}{A_{1}+y_{n}}\geq \frac{\alpha_{1}}{A_{1}+\frac{\alpha_{1}}{1-A_{2}+\epsilon}-A_{1}}= 1-A_{2}+\epsilon.$$
So, for some $\epsilon>0$, $y_{n+1} < \frac{1}{A_{2}+1-A_{2}+\epsilon }y_{n} $ for $n> n_{0}$.
Thus, by this fact and our original recursive equation, for each initial condition $(x_{0},y_{0})\in W_{+}$, $\lim_{n\rightarrow \infty}(x_{n},y_{n})=\left(\frac{\alpha_{1}}{A_{1}}, 0 \right)$. \par
We also get via Theorems 1, 2, 3, and 4 that the set 
$$\{(x,y)\in (0,\infty)^{2}| y=C(x)\}$$
is an invariant separatrix. Moreover, for $(x_{0},y_{0})\in \{(x,y)\in (0,\infty)^{2}| y=C(x)\}$, $\lim_{n\rightarrow \infty}(x_{n},y_{n})=\left(1-A_{2},\frac{\alpha_{1}}{1-A_{2}}-A_{1}\right)$.\par
Suppose $(x_{0},y_{0})\in (0,\infty )\times \{0\}$, then $(x_{n},y_{n})= \left(\frac{\alpha_{1}}{A_{1}}, 0 \right)$ for $n\geq 1$. Suppose $(x_{0},y_{0})\in  \{0\}\times [0,\infty )$, then $(x_{1},y_{1})\in (0,\infty)^{2}$ and we may apply the prior results.
\end{proof}

\section{Conclusion}
This paper is the first in a series of papers which will address the nontrivial competitive special cases of the following rational system in the plane, labeled system \#11. 
$$x_{n+1}=\frac{\alpha_{1}}{A_{1}+y_{n}},\quad y_{n+1}=\frac{\alpha_{2}+\beta_{2}x_{n}+\gamma_{2}y_{n}}{A_{2}+B_{2}x_{n}+C_{2}y_{n}},\quad n=0,1,2,\dots ,$$
with $\alpha_{1},A_{1}>0$ and $\alpha_{2}, \beta_{2}, \gamma_{2}, A_{2}, B_{2}, C_{2}\geq 0$ so that $\alpha_{2}+\beta_{2}+\gamma_{2}>0$ and $A_{2}+B_{2}+C_{2}>0$ and with nonnegative initial conditions $x_{0}$ and $y_{0}$ so that the denominator is never zero.
The nontrivial competitive special cases of system \#11 are the cases numbered $(11,6)$, $(11,14)$, $(11,15)$, $(11,21)$, $(11,27)$, $(11,29)$, $(11,38)$, and $(11,42)$, in the numbering system developed in \cite{cklm}.\par  
In this article, we have generalized several powerful results of Kulenovi\'c and Merino and used the new results to determine a near complete picture of the qualitative behavior for the two rational systems in the plane $(11,6)$ and $(11,14)$. Further work will focus on obtaining a near complete picture of the qualitative behavior for the remaining cases, namely  $(11,15)$, $(11,21)$, $(11,27)$, $(11,29)$, $(11,38)$, and $(11,42)$.\par
Theorems 1, 2, 3, and 4 of this article may be of use to mathematical biologists as well as other researchers studying rational systems in the plane. The preliminary and introductory material describing the standard results on order preserving and competitive maps was largely based off of the introductory material in \cite{kmbifurcation} and \cite{kminvariantmanifolds}.
\par\vspace{0.1 cm}

\end{document}